\definecolor{refkey}{rgb}{1, 0.5, 0}  
\definecolor{labelkey}{rgb}{0.1,1,0.2}
\newtheorem{theorem}{Theorem}[section]
\newtheorem{lemma}{Lemma}[section]
\newtheorem{corollary}{Corollary}[section]
\newtheorem{definition}{Definition}[section]
\def\implies{\Longrightarrow}
\def\bel{\begin{equation}\label}
\def\eeq{\end{equation}}
\def\bega{\begin{array}}
\def\enda{\end{array}}
\def\O{\mathcal{O}}
\def\dx{{\Delta x}}
\def\cW{\mathcal{W}}
\title{Traveling Waves for Conservation Laws with Nonlocal Flux for Traffic Flow on 
Rough Roads}
\author{Wen Shen\\ Mathematics Department, Penn State University, USA \\ Email: wxs27@psu.edu}
\begin{document}

\maketitle

\begin{abstract}
We consider two scalar conservation laws with non-local flux functions, describing 
traffic flow on roads with rough conditions. 
In the first model, the velocity of the car depends on an averaged downstream density,
while in the second model one considers an averaged downstream velocity. 
The road condition is piecewise constant with a jump at $x=0$. 
We study stationary traveling wave profiles cross $x=0$,
for all possible cases. 
We show that, depending on the case, there could exit infinitely many profiles,
a unique profile, or no profiles at all. 
Furthermore, some of the profiles are time asymptotic solutions for the Cauchy problem
of the conservation laws under mild assumption on the initial data,
while other profiles are unstable.  
\end{abstract}

{\small 
\textbf{2010 MSC.} Primary: 65M20, 35L02, 35L65; Secondary: 34B99, 35Q99.

\textbf{Key words:} 
Traffic flow, conservation laws with nonlocal flux, 
rough coefficient, traveling waves,  integro-differential equation.
}

\section{Introduction}\label{sec:1}
\setcounter{equation}{0}

We consider two scalar conservation laws with nonlocal flux, 
describing traffic flow with varying road condition.  
To be specific, we consider the integro-differential equations
\begin{equation}\label{eq:CL1}
\text{(M1)} \hspace{2cm}
\rho_t + \left[ \rho(t,x)   \kappa(x) \cdot v\left(\int_x^{x+h} \rho(t,y) w(y-x) \; dy\right)\right]_x =0, 
\end{equation}
and
\begin{equation}\label{eq:CL2}
\text{(M2)} \hspace{2cm}
\rho_t + \left[ \rho(t,x)   \cdot \int_x^{x+h} \kappa(y) v( \rho(t,y)) w(y-x) \; dy\right]_x =0.
\end{equation}
In both equations,  $\rho$ is the main unknown, denoting the density of cars on a road.
When $\rho=0$, the road is empty; when $\rho=1$, the road is packed full. 
The function $v(\cdot)\in \mathbb{C}^2$ satisfies
\begin{equation}\label{eq:vc}
v(0)=1, \quad v(1)=0, \quad 
\mbox{and}\quad v'(\rho) <0, \quad v''(\rho)\le 0 \quad \forall \rho\in[0,1].
\end{equation}
For example, the classical Lighthill-Whitham model~\cite{MR0072606} uses 
$v(\rho)=1-\rho$. 

The function $\kappa(x)$ denotes the speed limit of the road at $x$, which
represents the road condition. 
We consider rough road condition where $\kappa(x)$ can be discontinuous.

The models (M1) and (M2) can be formally derived as the continuum limit of particle models
where the speed of each car follows certain rules. 
For  model (M1), the speed of the car at $x$ is 
\[ \kappa(x) \cdot v\left(\int_x^{x+h} \rho(t,y) w(y-x) \,dy\right),\]
where a weighted average of car density 
on an interval of length $h$ in front of the driver (i.e., downstream) 
is computed. 
For  model (M2), the speed of the car at $x$ is 
\[ \int_x^{x+h} \kappa(y) v( \rho(t,y)) w(y-x) \; dy,\]
where the  weighted average is taken over $\kappa(y)v(\rho(t,y))$
over an interval of length $h$ in front of the driver.

The weight $w(x)$ is a non-negative function defined on $x\ge 0$.  
For a given $h$, we assume that $w(x)$ is continuous and bounded 
on $x>0$ with its support on $[0,h)$, and satisfies
\begin{equation}\label{eq:wc1}
\int_0^h w(x)\, dx=1, \qquad 
w(x) =0 \quad \forall  x\ge h, 
\qquad  w'(x) <0 \quad \forall x \in(0,h).
\end{equation}
Here, the assumption $w'(x) <0$  indicates that the condition right in front of the driver
is more important than those further ahead.

Formally, as $h\to 0+$ and $w(\cdot)$ converges to a dirac delta, 
both (M1) and (M2) converge to a scalar conservation law with local flux
\begin{equation}\label{eq:claw}
\rho_t + f(\kappa(x),\rho)_x=0, \qquad   \mbox{where}
\quad f(\kappa,\rho) = \kappa(x) \cdot \rho \;v(\rho).
\end{equation}
Unfortunately rigorous proofs of such convergences have not been established yet 
in the literature, not even for the case where $\kappa(x)$ is a constant function.

Non-local conservation laws has gained growing interests in recent years. 
In the simpler case where $\kappa(x)\equiv1$,  the existence and well-posedness of
solutions of the Cauchy problem for (M1) were established in~\cite{BG2016},
using numerical approximations generated by the Lax-Friedrich scheme. 
The same results for (M2) were proved in~\cite{Friedrich2018},
using numerical approximations generated by a Godunov-type scheme. 
Other models of conservation laws with nonlocal flux include 
models for slow erosion of granular flow~\cite{AmadoriShen2012, ShenZhang},
for synchronization~\cite{AmadoriHaPark2017}, 
for sedimentation~\cite{BetancourtBuergerKarlsenTory2011}, 
for nonlocal crowd dynamics~\cite{ColomboLecureuxMercier2011},
and for material with fading memory~\cite{Chen2007}.
Models with symmetric kernel functions have been studied in~\cite{Zumbrun1999},
and for systems in several space dimensions~\cite{AggarwalColomboGoatin2015}.
Some numerical integrations are studied in~\cite{AmorimColomboTeixeira2015}, 
and an overview of 
several nonlinear nonlocal models can be found in~\cite{DuKammLehoucqParks2012}.

In this paper we are interested in the traveling wave profiles for (M1) and (M2). 
We remark that, when the road condition is uniform, say $\kappa(x) \equiv 1$, 
the traveling wave profiles for (M1) and (M2) are studied in a recent 
work~\cite{RidderShen2018},
where we established various results on existence, 
uniqueness and stability of the traveling wave profiles. 
In this paper, however, we consider rough road condition, 
where $\kappa(x)$ is discontinuous.
To fix the idea,  we consider
\begin{equation}\label{kappa}
\kappa(x) = \begin{cases} \kappa^-, & x<0,\\
\kappa^+, & x>0 .
\end{cases}
\end{equation}

The main objective of this paper is to study the stationary traveling wave  profiles 
of  (M1) and (M2),  crossing the jump in $\kappa(x)$ at $x=0$. 
For all possible cases, we show analytical and numerical results 
on existence (and non-existence), uniqueness (and non-uniqueness) 
and stability (and instability) of these traveling wave profiles.

Traveling wave profiles for a local follow-the-leader model for traffic flow was studied 
in~\cite{ShenKarim2017} for homogeneous road conditions, 
and in ~\cite{ShenDDDE2017} with rough road condition. 
We also mention that, for the non-local models for slow erosion of granular flow, 
traveling waves and their local stability were studied in~\cite{GS2014}. 

The rest of the paper is organized as follows. 
In Section~\ref{sec:M1} we consider model (M1) and analyze two cases with 
$\kappa^- > \kappa^+$ and $\kappa^- < \kappa^+$, where 
each case has 4 sub-cases.  
In Section~\ref{sec:M2} we analyze model (M2).
Final remarks are given in Section~\ref{sec:fr}.

\section{Stationary traveling wave profiles for (M1)}\label{sec:M1}
\setcounter{equation}{0}

We seek a stationary profile $Q(x)$ for (M1) around $x=0$.
To simplify the notation, 
we introduce an averaging operator
\[
A(Q;x) \;\dot=\; \int_x^{x+h} Q(y) w(y-x) \; dy = \int_0^h Q(x+s) w(s) \; ds.
\]
Note that, since $w(x)$ vanishes outside the interval $(0,h)$, we could 
put the upper integration bound to be $\infty$.
Furthermore,  as long as $Q(x)$ is bounded, 
$x\mapsto A(Q;x)$ is  Lipschitz continuous
even if $Q(x)$ is discontinuous. 

Since $Q(x)$ is a stationary solution for (M1), it must satisfy
\begin{equation}\label{eq:Q1}
Q(x)   \kappa(x) \cdot v(A(Q;x)) \equiv\bar f = \mbox{constant},
\end{equation}
where 
\[
\bar f =\lim_{x\to\pm\infty} Q(x)   \kappa(x) \cdot v(A(Q;x)) .
\]

In the case when 
\begin{equation}\label{eq:lim}
\lim_{x\to-\infty}Q(x) =\rho^-,\qquad 
\lim_{x\to+\infty}Q(x) =\rho^+,\qquad
\lim_{x\to\pm\infty}Q'(x)=0,
\end{equation}
the following constraint on $\rho^-,\rho^+$ must be imposed
\begin{equation}\label{eq:bf}
\bar f = f(\kappa^-,\rho^-) = f(\kappa^+,\rho^+). 
\end{equation}

Differentiating~\eqref{eq:Q1} in $x$, we obtain an delay integro-differential equation
\begin{eqnarray}
&& \hspace{-2cm} Q'(x-)   \kappa(x-)  v(A(Q;x)) \nonumber\\
&=&-Q(x) \left[ \delta_0(x) (\kappa^+-\kappa^-) v(A(Q;x)) + k(x) v'(A(Q;x)) A(Q; x)_x\right].
\label{eq:Q2}
\end{eqnarray}
Here, $\delta_0(x)$ denote the dirac delta. 
Since $\kappa(x)$ and $Q(x)$ may be discontinuous,
in the left-hand side of the equation  the quantities are evaluated at $(x-)$.
For general theory on delay differential equations we refer to~\cite{MR0141863, MR0477368}.

The profiles $Q(x)$ are discontinuous at $x=0$. 
Since $A(Q;x)$ is continuous at $x=0$, so must $\kappa(x)Q(x)$.  
We impose the {\em connecting condition} on the traces
\begin{equation}\label{eq:connect}
Q(0-) \kappa^-= Q(0+)\kappa^+.
\end{equation}
Thus, the jump $Q(x)$ at $x=0$ is in the opposite direction of the jump in $\kappa(x)$.
In summary, the equation~\eqref{eq:Q1} with $\kappa(x)$ in~\eqref{kappa}
is equivalent to 
\begin{equation}\label{eq:Q3}
Q(x) v(A(Q;x)) = \begin{cases} 
\bar f / \kappa^-, & (x<0) \\
\bar f / \kappa^+, & (x>0) 
\end{cases} 
\qquad \mbox{and}\quad 
Q(0-) \kappa^-= Q(0+)\kappa^+. 
\end{equation}

\subsection{Review of previous results} \label{sec:review}

The simpler case where $\kappa(x)\equiv \bar\kappa=$ constant
is studied in a recent work~\cite{RidderShen2018}.
We review some related results which will be useful in the analysis of this paper. 

Under the assumption~\eqref{eq:vc}, 
there exists a unique stagnation point $\hat\rho$ where
\begin{equation}\label{eq:stag}
f_\rho(\kappa,\hat \rho)=0 ~~\forall \kappa, \qquad 
f_\rho(\kappa,\rho)>0 ~\mbox{for} ~\rho<\hat\rho, 
\quad f_\rho(\kappa,\rho)<0 ~\mbox{for} ~\rho>\hat\rho.
\end{equation}

Let $W(x)$ be a stationary profile for (M1) with $\kappa(x)\equiv \bar\kappa$.
It satisfies the following  integro-equation
\begin{equation}\label{eq:OW}
W(x) \cdot v\left(\int_x^{x+h} \rho(t,y) w(y-x) \; dy\right) \equiv 
\frac{\bar f }{\bar\kappa} = \frac{1}{\bar\kappa} \lim_{x\to\pm\infty} f(\bar\kappa,W(x)).
\end{equation}

Next Lemma was proved in~\cite{RidderShen2018} (Lemma~3.1). 

\begin{lemma}[Asymptotic limits]\label{lm:AL}
Assume that $W(x)$ is a solution of~\eqref{eq:OW} which satisfies
\[ 
\lim_{x\to-\infty} W(x) = \rho^-,\qquad 
\lim_{x\to +\infty} W(x) = \rho^+,\qquad \lim_{x\to\pm\infty} W'(x)=0.
\] 
Let $\hat\rho$ be the stagnation point satisfying~\eqref{eq:stag}. 
The following holds.
\begin{itemize}
\item[i)] As $x\to+\infty$, $W(x)$ approaches $\rho^+$ with an exponential rate 
if and only if  $\rho^+ > \hat\rho$;
\item[ii)] As $x\to-\infty$, $W(x)$  approaches $\rho^-$ with an exponential rate 
if and only if  $\rho^- < \hat\rho$. 
\end{itemize}
\end{lemma}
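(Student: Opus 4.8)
The plan is to linearize the profile equation~\eqref{eq:OW} about each asymptotic state and read off the decay rate from the resulting characteristic equation. Writing $W(x) = \rho^\pm + u(x)$ near $x \to \pm\infty$, so that $A(W;x) = \rho^\pm + A(u;x)$ with $A(u;x) = \int_0^h u(x+s)\,w(s)\,ds$, and recalling that the constant in~\eqref{eq:OW} equals $\rho^\pm v(\rho^\pm)$ by~\eqref{eq:lim}, I substitute into $W\,v(A(W;x)) = \rho^\pm v(\rho^\pm)$ and expand to first order. This yields the linear functional equation $v(\rho^\pm)\,u(x) + \rho^\pm v'(\rho^\pm)\,A(u;x) = 0$. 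Inserting the ansatz $u(x) = e^{\mu x}$ and using $A(e^{\mu\cdot};x) = e^{\mu x}\,\widehat w(\mu)$ with $\widehat w(\mu) \doteq \int_0^h e^{\mu s} w(s)\,ds$, I obtain the characteristic equation
\[
\widehat w(\mu) = -\frac{v(\rho^\pm)}{\rho^\pm\,v'(\rho^\pm)} \doteq R(\rho^\pm).
\]

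The second step is to locate the root $\mu$. The weight conditions~\eqref{eq:wc1} give $\widehat w(0) = 1$ and $\widehat w'(\mu) = \int_0^h s\,e^{\mu s} w(s)\,ds > 0$, with $\widehat w(\mu) \to 0^+$ as $\mu \to -\infty$ and $\widehat w(\mu) \to +\infty$ as $\mu \to +\infty$; hence $\widehat w$ is a strictly increasing bijection of $\mathbb{R}$ onto $(0,\infty)$, and since $R(\rho^\pm) > 0$ the equation has a unique real root. Its sign follows by comparing $R$ with $\widehat w(0) = 1$: a short computation using $f(\kappa,\rho) = \kappa\rho v(\rho)$ gives $R(\rho) - 1 = -f_\rho(\kappa,\rho)/\bigl(\kappa\rho v'(\rho)\bigr)$, so by~\eqref{eq:stag} we have $R(\rho) < 1 \iff \rho > \hat\rho$ and $R(\rho) > 1 \iff \rho < \hat\rho$ (with $R(\hat\rho)=1$). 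Therefore the real root satisfies $\mu < 0 \iff \rho > \hat\rho$ and $\mu > 0 \iff \rho < \hat\rho$. At $x\to+\infty$ a decaying mode needs $\mu<0$, forcing $\rho^+ > \hat\rho$; at $x\to-\infty$ decay needs $\mu>0$, forcing $\rho^- < \hat\rho$. This produces the sign conditions in (i) and (ii).

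It remains to promote the formal linearization to the two implications. For the ``only if'' directions I would work on the exact equation: if $W(x)-\rho^+ \sim C e^{\mu x}$ with some $\mu<0$, then dividing $(\rho^+ + u)\,v(\rho^+ + A(u;x)) = \rho^+ v(\rho^+)$ by $u(x)$ and letting $x\to+\infty$ (so the quadratic remainder is negligible and $A(u;x)/u(x) \to \widehat w(\mu)$) shows $\mu$ must satisfy the characteristic equation, whence $R(\rho^+)<1$ and $\rho^+ > \hat\rho$; the case $x\to-\infty$ is symmetric. For the ``if'' directions I would build exponential barriers: when $\rho^+ > \hat\rho$ the root $\mu_*<0$ exists, and exploiting monotonicity of the profile together with the contraction $A(u;x) = R\,u(x)$, $R<1$, one compares $W(x)-\rho^+$ against $C e^{(\mu_*+\epsilon) x}$ on successive intervals of length $h$ to get geometric, hence exponential, decay; alternatively this follows from the stable-manifold theory for the delay integro-differential equation~\eqref{eq:Q2}, for which I would invoke~\cite{MR0141863, MR0477368}.

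The main obstacle is the ``if'' direction. The operator $A(u;x)$ is of \emph{advance} type (it looks forward over $[x,x+h]$), so the decay estimate cannot be propagated by a standard Gronwall or ODE argument; one must either use monotonicity of $W$ to close a barrier comparison or carefully apply the spectral machinery, verifying that the real root is the rightmost (respectively leftmost) root so that no slower-decaying or oscillatory component survives, and controlling the nonlinear remainder uniformly near the limit state.
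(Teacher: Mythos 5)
The paper does not actually contain a proof of this lemma: it is imported verbatim from Lemma~3.1 of \cite{RidderShen2018}, so there is no in-paper argument to compare against line by line. Judged on its own, your approach --- linearize \eqref{eq:OW} about the asymptotic state, derive the characteristic equation $\widehat w(\mu)=R(\rho^\pm)$ with $R(\rho)=-v(\rho)/(\rho v'(\rho))$, and convert the sign of the unique real root via $R(\rho)-1 = -f_\rho(\kappa,\rho)/(\kappa\rho v'(\rho))$ and \eqref{eq:stag} --- is the natural one for this statement and is essentially the route taken in the cited reference. Your formal computations are correct: the monotonicity and range of $\widehat w$ follow from \eqref{eq:wc1}, $R>0$ follows from \eqref{eq:vc}, and the sign dichotomy $\mu<0\iff\rho>\hat\rho$ is exactly what drives parts (i) and (ii).

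Two points deserve attention in the completion you sketch. First, your ``only if'' argument assumes $W(x)-\rho^+\sim Ce^{\mu x}$, i.e.\ an exact exponential asymptote, whereas ``approaches with an exponential rate'' only gives a one-sided bound; the cleaner route is the contrapositive via the identity $A(u;x)=R\,u(x)+\mathcal{O}(u^2)$: since $A$ averages $u$ over the \emph{forward} interval $[x,x+h]$ and $u$ is monotone there, $|A(u;x)|\le|u(x)|$ near $+\infty$ forces $R(\rho^+)\le 1$, and strict inequality is what yields (and is required for) geometric decay over successive intervals of length $h$. Second, that comparison uses monotonicity of $W$ near the relevant infinity, which is not among the hypotheses of Lemma~\ref{lm:AL} as stated (it is supplied by Theorem~\ref{th:O1} in the paper's actual usage); you should either add it as a standing assumption or note that eventual monotonicity near the limit can be extracted from the equation. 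With those caveats, the proposal identifies the right mechanism and the right obstacles, and the barrier/iteration argument you outline is the standard way to close the ``if'' direction for an advance-type functional equation of this kind.
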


The following  existence and uniqueness result 
of the profile is proved in Theorem~3.2 of~\cite{RidderShen2018}.

\begin{theorem}\label{th:O1}
Let $W(x)$ satisfy the equation~\eqref{eq:OW} and the boundary conditions
\begin{equation}\label{eq:BCO}
\lim_{x\to\pm\infty} W(x) = \rho^\pm, \quad
f(\bar\kappa,\rho^-) = f(\bar\kappa,\rho^+), 
\quad  \rho^- < \hat\rho<\rho^+.
\end{equation}
There exist  solutions of $W(x)$  which are monotone increasing.
Furthermore, the solution is unique up to horizontal shifts. 
\end{theorem}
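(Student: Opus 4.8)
The plan is to recast the profile equation~\eqref{eq:OW} as a fixed-point problem in the class of monotone profiles and to exploit the order structure inherited from the monotonicity of $v$ and of the averaging operator $A(\cdot\,;x)$. Write $\phi \dot= \bar f/\bar\kappa > 0$. Since $v$ is strictly decreasing on $[0,1]$ by~\eqref{eq:vc}, it is invertible there, so~\eqref{eq:OW} is equivalent to the pointwise relation $W(x) = g(A(W;x))$, where $g(a) \dot= \phi/v(a)$ is strictly increasing. A short computation using~\eqref{eq:stag} shows $g$ has exactly two fixed points, namely $\rho^-$ and $\rho^+$ (these are the two roots of $a\,v(a)=\phi$ straddling $\hat\rho$), and that $g(a) < a$ for every $a \in (\rho^-,\rho^+)$. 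This algebraic fact is precisely the reflection of the geometric hypothesis $\rho^- < \hat\rho < \rho^+$ in~\eqref{eq:BCO} and is what makes a monotone increasing connection possible.

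For existence I would first note that $\mathcal{T}[W](x) \dot= g(A(W;x))$ is order-preserving: if $W_1 \le W_2$ pointwise then $A(W_1;\cdot) \le A(W_2;\cdot)$ because $w \ge 0$, and since $g$ is increasing, $\mathcal{T}[W_1] \le \mathcal{T}[W_2]$. Moreover $\mathcal{T}$ sends nondecreasing functions valued in $[\rho^-,\rho^+]$ to functions with the same properties and regularizes them, since $x \mapsto A(W;x)$ is Lipschitz. To remove the translation invariance I normalize by $W(0)=\hat\rho$ and compose $\mathcal{T}$ with the horizontal shift restoring this value, obtaining a map $\tilde{\mathcal{T}}$. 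A fixed point of $\tilde{\mathcal{T}}$ is then produced by a topological fixed-point argument (Schauder), with compactness supplied by Helly's selection theorem on the convex set of monotone functions and continuity coming from the Lipschitz regularity of $A(W;\cdot)$; the normalization keeps the iterates from sliding off to the constant solutions $\rho^\pm$. Any such fixed point is nondecreasing and satisfies $W = g(A(W;\cdot))$, hence~\eqref{eq:OW}.

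It remains to check that the constructed $W$ is a genuine connecting profile and not a degenerate constant. Being monotone and bounded, $W$ has one-sided limits; letting $x \to \pm\infty$ in $W = g(A(W;\cdot))$ shows each limit is a fixed point of $g$, hence equals $\rho^-$ or $\rho^+$, and the normalization $W(0)=\hat\rho \in (\rho^-,\rho^+)$ with monotonicity forces $W(-\infty)=\rho^-$, $W(+\infty)=\rho^+$. Lemma~\ref{lm:AL} then applies: because $\rho^- < \hat\rho < \rho^+$, both limits are attained at an exponential rate, so the profile is nondegenerate and increasing, as claimed.

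For uniqueness up to horizontal shifts I would use the sliding method. Given two monotone solutions $W_1,W_2$, slide $W_1(\cdot+\tau)$ upward from $\tau \to -\infty$, where it lies below $W_2$ on every compact set (with the tails controlled by the exponential rates of Lemma~\ref{lm:AL}), and let $\tau^*$ be the first shift at which $W_1(\cdot+\tau^*) \ge W_2$, with contact at some $x_0$. The crux is a nonlocal strong comparison principle: from $W = g(A(W;\cdot))$, the strict monotonicity of $g$ together with strict positivity of $w$ on $(0,h)$ from~\eqref{eq:wc1} forces the two profiles to coincide on the entire forward window $[x_0,x_0+h]$ the instant they touch, and iterating propagates coincidence to $[x_0,+\infty)$. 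I expect this comparison step to be the main obstacle: because $A$ looks strictly downstream, agreement propagates naturally forward but not backward, so the backward direction must be handled separately — either via the differentiated relation $W'(x)=g'(A(W;x))\int_0^h W'(x+s)\,w(s)\,ds$, which expresses $W'$ as a forward average of itself, or by invoking the exponential tail estimate of Lemma~\ref{lm:AL} to pin down the phase as $x \to -\infty$. Controlling this forward-looking nonlocal coupling, rather than any single estimate, is the technical heart of the argument.
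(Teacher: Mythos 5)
A preliminary remark: this paper never proves Theorem~\ref{th:O1}; it is quoted from Theorem~3.2 of \cite{RidderShen2018}, so there is no in-paper argument to compare against, and your proposal must stand on its own. Your structural observations are correct and well chosen: with $g(a)=\bar f/(\bar\kappa\,v(a))$ the equation becomes $W=g(A(W;\cdot))$, $g$ is strictly increasing, its only fixed points are $\rho^\pm$ with $g(a)<a$ on $(\rho^-,\rho^+)$, and $\mathcal{T}[W]=g(A(W;\cdot))$ is order-preserving and maps nondecreasing $[\rho^-,\rho^+]$-valued functions into themselves. The existence step, however, has a concrete gap. The set on which Helly or Arzel\`a--Ascoli yields compactness is the set of \emph{all} nondecreasing functions valued in $[\rho^-,\rho^+]$ with $W(0)=\hat\rho$, and on that set your shift normalization is not even well defined: since $\sup\mathcal{T}[W]=g(W(+\infty))$, the image never attains the value $\hat\rho$ whenever $W(+\infty)<g^{-1}(\hat\rho)$ (for instance $W\equiv\hat\rho$ gives $\mathcal{T}[W]\equiv g(\hat\rho)<\hat\rho$). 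Restricting to profiles with the correct limits $\rho^\pm$ repairs this but destroys closedness, hence compactness --- the classical obstruction to building heteroclinic connections by Schauder --- and would require uniform tail estimates you do not supply. Moreover a fixed point of the shift-composed map satisfies $W=g(A(W;\cdot+\tau_W))$ for some a priori nonzero $\tau_W$, which is not equation~\eqref{eq:OW}; you never show the shift vanishes.

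On uniqueness, you correctly name the two hard points but resolve neither. First, in the sliding argument the critical shift $\tau^*$ may produce contact only as $x\to\pm\infty$ rather than at a finite $x_0$; this is exactly where the exponential rates of Lemma~\ref{lm:AL} must be deployed quantitatively, not merely cited. Second, since $A$ looks only downstream, touching at $x_0$ propagates coincidence only onto $[x_0,+\infty)$. The backward step can actually be closed cleanly: the relation $W(x)=g\bigl(\int_0^h W(x+s)w(s)\,ds\bigr)$ determines $W$ on $[x_0-\delta,x_0)$ from its values on $[x_0,+\infty)$ by a contraction for small $\delta$, because the unknown portion of the average carries total weight at most $w(0)\delta$; iterating gives backward uniqueness of the continuation. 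As written, though, you flag this decisive step as an ``obstacle'' rather than carrying it out, and your suggested identity $W'(x)=g'(A(W;x))\int_0^h W'(x+s)w(s)\,ds$ does not by itself yield backward propagation. Until the compactness/normalization issue and these two uniqueness steps are filled in, the proposal is a plausible program rather than a proof.
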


\medskip

In the remaining of this section  we let $W(x)$ denote 
the smooth and monotone profile with  $\kappa(x)\equiv \kappa^+$ 
and the boundary conditions
\[
\lim_{x\to\infty}W(x) =\rho^+, \quad 
\lim_{x\to-\infty} W(x) = \rho^-_*, \]
where
\[ 0<\rho^-_* <\hat\rho<\rho^+<1, \qquad
f(\kappa^+, \rho^-_* ) = f(\kappa^+, \rho^+). 
\]

By Lemma~\ref{lm:AL}, we see that 
the asymptote $\lim_{x\to\infty} W(x) = \rho^+$ is stable if $\rho^+ > \hat\rho$, 
while the asymptote $\lim_{x\to-\infty} W(x) = \rho^-$ is stable if $\rho^- < \hat\rho$. 
This implies that $\rho^+ \le \hat\rho$ is an unstable asymptote for $x\to\infty$,
and $\rho^-\ge \hat\rho $  is an unstable asymptote for $x\to-\infty$.

\bigskip

When $\kappa(x)$ is discontinuous, the profiles $Q(x)$ are very differently.
We discuss two cases in the next two sections, 
where Case A is for $\kappa^->\kappa^+$, 
and Case B for $\kappa^-<\kappa^+$.

For notational convenience, we introduce the functions:
\begin{equation}\label{eq:ff}
f^-(\rho) \;\dot=\; f(\kappa^-,\rho)=\kappa^- \rho v(\rho),\qquad
f^+(\rho) \;\dot=\; f(\kappa^+,\rho)=\kappa^+ \rho v(\rho).
\end{equation}
Under the assumptions~\eqref{eq:vc}, both $f^-(\rho)$ and $f^+(\rho)$
are strictly concave functions for $\rho\in[0,1]$
with $f'' <0$ and $f^\pm(0)=f^\pm(1)=0$.

\subsection{Case A:  $\kappa^- > \kappa^+$}
\label{sec:A}

The values $\rho^-,\rho^+$ must satisfy~\eqref{eq:bf}. 
Given $\bar f$ that lies in the ranges of both $f^-$ and $f^+$, 
there exist unique values $\rho_{1},\rho_2,\rho_3,\rho_4$ 
such that (see Figure~\ref{fig:rhos})
\begin{equation}\label{eq:cA}
\rho_1 < \rho_2 \le \hat \rho \le \rho_3 < \rho_4, \qquad 
f^-(\rho_1)=f^-(\rho_4) =\bar f = f^+(\rho_2)=f^+(\rho_3).
\end{equation}

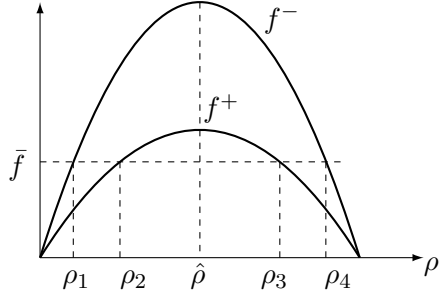
\begin{figure}[htbp]
\begin{center}
\setlength{\unitlength}{0.85mm}
\begin{picture}(60,50)(-3,-5)  
\put(0,0){\vector(1,0){60}}\put(60,-2){$\rho$}
\put(0,0){\vector(0,1){40}}
\multiput(0,15)(2,0){24}{\line(1,0){1}}\put(-5,13){$\bar f$}
\multiput(5.2,15)(0,-2){8}{\line(0,-1){1}}\put(2,-4){ $\rho_1$}
\multiput(12.5,15)(0,-2){8}{\line(0,-1){1}}\put(11,-4){ $\rho_2$}
\multiput(25,40)(0,-2){20}{\line(0,-1){1}}\put(22,-4){ $\hat\rho$}
\multiput(37.5,15)(0,-2){8}{\line(0,-1){1}}\put(33,-4){ $\rho_3$}
\multiput(44.7,15)(0,-2){8}{\line(0,-1){1}}\put(43,-4){ $\rho_4$}
\put(25.5,22){$f^+$}\put(35,36){$f^-$}
\thicklines
\qbezier(0,0)(25,40)(50,0)
\qbezier(0,0)(25,80)(50,0)
\end{picture}
\caption{Flux functions $f^-(\rho), f^+(\rho)$, 
and location of $\rho_1,\rho_2,\rho_3,\rho_4$ and $\hat\rho$.}
\label{fig:rhos}
\end{center}
\end{figure}

We have 4 sub cases:
\begin{itemize}
\item[A1.]  We have $\rho^-=\rho_1$, $\rho^+=\rho_3$, with
$\rho^- < \hat \rho < \rho^+$; 
\item[A2.]  We have 
$\rho^-=\rho_1$, $\rho^+=\rho_2$, with $\rho^- <\rho^+ \le \hat\rho$;
\item[A3.]  We have $\rho^-=\rho_4$, $\rho^+=\rho_3$, with
$\hat\rho \le \rho^+ < \rho^-$; 
\item[A4.] We have $\rho^-=\rho_4$, $\rho^+=\rho_2$, with
$\rho^+ < \hat\rho <\rho^-$. 
\end{itemize}

We first observe that the cases with $\bar f =0$ are trivial. 
In this case we have $\rho_1=\rho_2=0$ and $\rho_3=\rho_4=1$. Then
\begin{itemize}
\item For Case A1, we $\rho^-=0,\rho^+=1$ and $Q(x)$
 is the unit step function;
\item For Case A2, we  have $\rho^-=\rho^+=0$, and we have $Q(x) \equiv 0$; 
\item for Case A3, we have $\rho^-=\rho^+=1$,  and we have $Q(x)\equiv 1$;
\item For Case A4, we $\rho^-=1,\rho^+=0$, there are no profiles. 
See discussion in Section~\ref{sec:A4}.
\end{itemize}
For the rest of the paper 
we only consider the nontrivial case with $\bar f >0$, where $0< \rho^\pm<1$.

\subsubsection{Case A1:  $0<\rho^-<\hat\rho<\rho^+<1$}
\label{sec:A1}

 
\begin{theorem}[Existence of profiles]\label{th1}
Assume $\kappa^->\kappa^+$, and 
let $\rho^-,\rho^+$ satisfy 
$0<\rho^-<\hat\rho<\rho^+<1$
and~\eqref{eq:bf}. 
Then, there exist infinitely many  stationary monotone profiles $Q(x)$ 
which satisfies the equation~\eqref{eq:Q1} and the boundary conditions
\begin{equation}\label{eq:BC}
\lim_{x\to-\infty} Q(x) = \rho^-,\qquad \lim_{x\to\infty} Q(x) = \rho^+.
\end{equation}
The profiles  are discontinuous at $x=0$, but are continuous on $x>0$ and $x<0$.
\end{theorem}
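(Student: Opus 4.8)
The plan is to build the profile separately on the two half-lines and glue them at $x=0$ through the connecting condition \eqref{eq:connect}, exploiting the fact that for $x>0$ the nonlocal relation \eqref{eq:Q3} decouples from the left half-line: for any $x>0$ the averaging window $[x,x+h]$ lies entirely in $\{x>0\}$, so on $(0,\infty)$ the profile solves exactly the homogeneous $\kappa^+$ equation \eqref{eq:OW} with $\bar\kappa=\kappa^+$. Since $\rho^+=\rho_3>\hat\rho$ and, by concavity of $f^+$, the companion root satisfies $\rho_2<\hat\rho$ (both by \eqref{eq:cA}), Theorem~\ref{th:O1} supplies the monotone increasing profile $W$ connecting $\rho_2$ to $\rho_3$, unique up to translation. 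I would set $Q(x)=W(x-a)$ on $[0,\infty)$ for a free shift $a\in\R$; then $Q(0+)=W(-a)$ sweeps out all of $(\rho_2,\rho_3)$ as $a$ varies. The connecting condition forces $Q(0-)=(\kappa^+/\kappa^-)\,Q(0+)$, and a short computation from $f^+(\rho_2)=f^-(\rho_1)$ together with $\rho_1<\rho_2$ and $v'<0$ gives $Q(0-)>\rho_1$, while trivially $Q(0-)<Q(0+)<\rho_3<\rho_4$. This one-parameter freedom in $a$ is the source of the infinitely many profiles: the jump is pinned at $x=0$, so there is no translation symmetry to identify them, and distinct values of $Q(0+)$ yield genuinely distinct profiles.

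It remains to produce the left piece on $(-\infty,0)$ solving the $\kappa^-$ equation in \eqref{eq:Q3} with trace $Q(0-)$ and with $\lim_{x\to-\infty}Q(x)=\rho^-=\rho_1$. Writing the equation as $Q(x)=(\bar f/\kappa^-)/v(A(Q;x))$ and noting that $A(Q;x)$ depends only on $Q$ on $(x,x+h)$, values strictly to the right of $x$, this is a leftward (backward) delay functional equation in which the already-constructed right piece supplies the data whenever the window straddles $0$, i.e. for $-h<x<0$; for $x\le -h$ the window lies in $\{x<0\}$ and the equation reverts to the homogeneous $\kappa^-$ equation. I would solve it by leftward continuation (method of steps combined with a contraction argument on short intervals, valid because $v(A)$ stays bounded away from $0$), obtaining a continuous solution on all of $(-\infty,0)$.

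The two a priori estimates that drive the argument are monotonicity and range confinement. Differentiating \eqref{eq:Q3} gives $\operatorname{sign}Q'(x)=\operatorname{sign}A_x(x)$, where $A_x(x)=\int_0^h Q'(x+s)\,w(s)\,ds$ carries, in addition, a nonnegative contribution from the upward jump, namely $[Q(0+)-Q(0-)]\,w(-x)>0$ whenever $-h<x<0$. Since the right piece $W$ is increasing, this ``positivity-propagates'' structure lets me bootstrap $Q'>0$ from $x=0^-$ leftward, so the left piece is monotone increasing; monotonicity then confines $Q$ to $(\rho_1,Q(0-)]\subset(0,1)$, keeps $v(A)$ bounded below since $A<\rho_3<1$, and thereby rules out blow-up, giving global existence. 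Finally a monotone bounded solution converges as $x\to-\infty$ to a rest state $\rho_*$ with $f^-(\rho_*)=\bar f$, hence $\rho_*\in\{\rho_1,\rho_4\}$; because $\rho_*\le Q(-h)<\rho_4$ we must have $\rho_*=\rho_1$, and Lemma~\ref{lm:AL}(ii) confirms this limit, being $<\hat\rho$, is approached at an exponential rate.

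The main obstacle is the construction and well-posedness of the left piece on the coupling zone $(-h,0)$, where the nonlocal term mixes the unknown left values with the fixed right piece across the discontinuity: one must show the leftward delay equation is solvable there and, above all, that monotonicity is preserved through the jump. Once monotonicity and the range bound are secured, global existence and the identification of the asymptotic limit $\rho_1$ follow relatively directly from the concavity of $f^-$ and Lemma~\ref{lm:AL}.
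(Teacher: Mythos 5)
Your proposal is correct and follows the same overall decomposition as the paper: take the right half to be a horizontal shift of the homogeneous $\kappa^+$ profile $W$ (the free shift being the source of the infinitely many profiles), pass through $x=0$ via the connecting condition~\eqref{eq:connect} with the same trace estimate $Q(0-)>\rho_1$, then continue leftward and identify the limit $\rho^-$ from monotonicity, boundedness and the flux relation. Where you genuinely diverge is in the mechanism for producing the left piece. The paper constructs it by discretization: it introduces grid values $Q_i$ on $x<0$, defines $Q_{i-1}$ as the root of the scalar equation $G(Q_{i-1})=Q_{i-1}v(A(Q^{\Delta x};x_{i-1}))-\bar f/\kappa^-=0$, proves $G(0)<0$, $G(Q_i)>0$ and $G'>0$ for small $\Delta x$ so that $0<Q_{i-1}<Q_i$, and obtains existence, positivity and monotonicity simultaneously from the discrete induction before passing to the limit $\Delta x\to 0$. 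You instead solve the leftward delay equation $Q(x)=(\bar f/\kappa^-)/v(A(Q;x))$ by the method of steps with a contraction on short intervals, and establish monotonicity separately by the sign identity $\operatorname{sign}Q'=\operatorname{sign}A_x$ together with the positive jump contribution $[Q(0+)-Q(0-)]\,w(-x)$ on $-h<x<0$ and a first-crossing argument. Both routes are sound; the paper's buys a ready-made numerical scheme (it is exactly the Newton iteration used for the figures), while yours stays at the continuous level, makes the propagation of monotonicity through the coupling zone $(-h,0)$ more transparent, and pins down the asymptotic limit slightly more cleanly (excluding $\rho_4$ by $\rho_*\le Q(0-)<\rho_4$ before invoking Lemma~\ref{lm:AL}). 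The only point to flesh out in your version is the invariant set for the contraction — one must check that the iteration keeps $A$ away from $1$ so that $v(A)$ stays bounded below, which follows from the range bound $Q\le(\kappa^+/\kappa^-)\rho_3<\rho_3$ you already have.
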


\begin{proof}
The proof takes several steps.

\textbf{Step 1.} 
Since $\rho^+>\hat\rho$ is a stable asymptote as $x\to\infty$, then on 
 $x>0$,  the profile $Q(x)$ can be either the constant function $Q(x)\equiv \rho^+$, 
or some horizontal shifts of $W(x)$. 
This solution is  continuous and monotone increasing on $x>0$. 

\textbf{Step 2.}
At $x=0$ the profile $Q(x)$ has an upward jump. 
The traces $Q(0-)$ and $Q(0+)$ satisfy
\[
0 <Q(0-) = \frac{\kappa^+}{\kappa^-} Q(0+) < Q(0+)\le \rho^+<1.
\]

Furthermore, 
recalling the definition of $\rho_2$ in~\eqref{eq:cA}, we have
\[ \rho_2 <\hat \rho\quad \mbox{and}\quad 
\kappa^+ \rho_2 v(\rho_2) = \kappa^- \rho^- v(\rho^-).\]
Then it holds 
\[ 
Q(0-) > \frac{\kappa^+}{\kappa^-} \rho_2 
= \frac{ \kappa^- \rho^- v(\rho^-)}{\kappa^- v(\rho_2)} 
= \frac{  v(\rho^-)}{ v(\rho_2)}  \rho^- > \rho^-.
\]

\textbf{Step 3.}
With this given $Q(x)$ on $x\ge 0$,  we solve an ``initial value problem'' for $Q(x)$
backward in $x$ for  $x<0$. 
In order to establish the existence of solutions for the initial value problem, 
we generate approximate  solutions by discretization. 
Fix the mesh size $\dx$, 
we have the discretization
\begin{equation}\label{eq:mesh}
x_i = i \dx, \qquad Q_i \approx Q(x_i), \quad i=\mathbb{Z}^-, 
\qquad \mbox{and}\quad  Q_0 = Q(0-). 
\end{equation}
On $x<0$, the approximate solution $Q^\dx(x)$ is reconstructed 
as the linear interpolation through
the discrete values $Q_i$, i.e.,
\[
Q^\dx(x) =  Q_{i-1} \frac{x_i-x}{\dx}  +  Q_{i}\frac{x-x_{i-1}}{\dx} ,
\qquad \mbox{for}\quad x\in[x_{i-1},x_i].
\]

The discrete values $Q_i$ can be generated iteratively. 
Given a profile 
$Q^\dx(x)$ on $x\ge x_i$, we compute the value $Q_{i-1}$ by solving the
nonlinear equation
\begin{equation}\label{G}
G(Q_{i-1}) = 0 \quad \mbox{where}\quad 
G(Q_{i-1}) \; \dot=\;  Q_{i-1} \;v(A(Q^\dx;x_{i-1})) - \bar f/\kappa^-, 
\end{equation}
assuming that 
\begin{equation}\label{eq:GQ}
Q_k\; v(A(Q^\dx;x_k))  = \bar f/\kappa^-,\quad \forall k \ge i.
\end{equation}

\textbf{Step 4.}
We now verify that~\eqref{G} has a unique solution. 
We compute:
\begin{eqnarray*}
G(Q_i) &=& Q_i \left[v(A(Q^\dx;x_{i-1})) -  v(A(Q^\dx;x_i)) \right] >0 ,\\
G(0) &=& -\bar f/\kappa^- <0.
\end{eqnarray*}
Furthermore, for $\dx$ sufficiently small, we have
\begin{eqnarray*}
G'(Q_{i-1}) &=& v(A(Q^\dx;x_{i-1})) + Q_{i-1} \frac{\partial v(A(Q^\dx;x_i))}{\partial {Q_{i-1}}} \\
&=& v(A(Q^\dx;x_{i-1})) + Q_{i-1}  v'(A(Q^\dx;x_i)) 
\frac{\partial A(Q^\dx;x_i)}{\partial {Q_{i-1}}} \\
&=& v(A(Q^\dx;x_{i-1})) + Q_{i-1}  v'(A(Q^\dx;x_i))  \int_{x_{i-1}}^{x_i} \frac{x_i-y}{\dx} w(y-x_{i-1})\; dy.
\end{eqnarray*}
Since 
\[\int_{x_{i-1}}^{x_i} \frac{x_i-y}{\dx} w(y-x_{i-1})\; dy=
\int_{0}^{\dx} \frac{\dx-s}{\dx} w(s)\; ds \le \frac12 w(0) \cdot \dx
= \O(1) \cdot \dx,
\]
we conclude that
\[
G'(Q_{i-1}) >0.
\]

Thus,~\eqref{G} has a unique solution of $Q_{i-1}$, satisfying $0<Q_{i-1}<Q_i$.
We remark that, numerically,~\eqref{G} can be solved efficiently by Newton iterations, 
using $Q_i$ as the initial guess.

By induction we conclude that $0<Q_{i-1}<Q_i$ for all $i<0$. 
Thus the approximate solution $Q^\dx(x)$ is positive and 
monotone increasing on $x<0$.  
By construction, it satisfies the equation~\eqref{eq:Q1}
at every grid point $x_i$. 
Taking the limit $\dx\to0$, 
$Q^\dx(x)$ converges to a limit function $Q(x)$, positive and
monotone increasing, and satisfies~\eqref{eq:Q1} for every $x$. 

\textbf{Step 5.} 
It remains to verify the limit as $x\to-\infty$.
Since $Q(x)$ is monotone and bounded, it must admit a limit 
at $x\to -\infty$.  Call it $\rho^-$.  We have
\[
\bar f = \lim_{x\to-\infty} \kappa(x) Q(x) v(A(Q;x)) = \kappa^- \rho^- v(\rho^-).
\]
We must also have $\rho^- < \hat\rho$, since by Lemma~\ref{lm:AL} 
it is the only stable asymptote as $x\to-\infty$. 
This completes the proof. 
\end{proof}

\begin{figure}[htbp]
\begin{center}
\includegraphics[width=7cm,clip,trim=5mm 5mm 10mm 10mm]{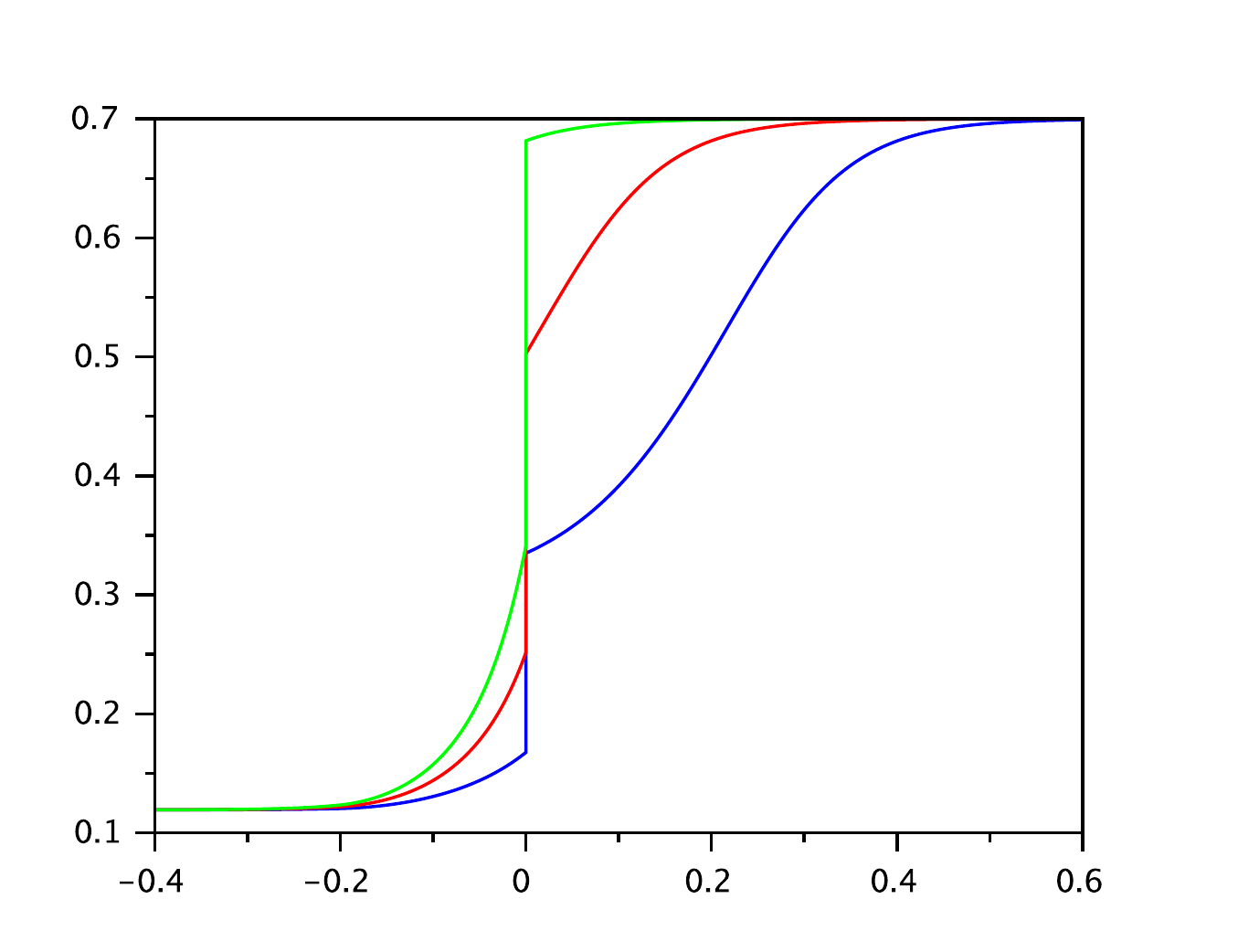}
\caption{Sample traveling waves for Case A1, with $\kappa^-=2,\kappa^+=1, h=0.2.$}
\label{fig:A1}
\end{center}
\end{figure}

Several sample profiles are plotted in Figure~\ref{fig:A1}.
They are generated by numerical simulation, solving~\eqref{G} by Newton iterations.  
All profiles are bounded and monotone, continuous except  an upward jump at $x=0$. 
We further note that 
the profiles do not cross each other.
In other words, the profiles are ordered.
We have the following Corollary.

\begin{corollary}[Ordering of the profiles]\label{cor2}
Consider the setting of Theorem~\ref{th1}, and
let $Q_1(x)$ and $Q_2(x)$ be two distinct profiles.  
Then, we must have either
\[
  Q_1(x) > Q_2(x)  \quad \forall x>0 ~\mbox{and}~ x<0
\]
or 
\[
  Q_1(x) < Q_2(x)  \quad \forall x>0~ \mbox{and} ~x<0.
\]
\end{corollary}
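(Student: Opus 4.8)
The plan is to reduce the statement to a comparison of the boundary traces and then propagate that comparison across $x=0$ and into $x<0$ by a first--crossing argument applied directly to the integral relation \eqref{eq:Q3}. On $x>0$ every profile produced by Theorem~\ref{th1} is either the constant $\rho^+$ or a horizontal shift of the transition profile $W$, and this one--parameter family is totally ordered by its trace $Q(0+)$: the constant $\rho^+$ strictly dominates every finite shift, and $W(x+a_1)>W(x+a_2)$ whenever $a_1>a_2$ once $W$ is known to be strictly increasing. Since the backward construction of Theorem~\ref{th1} yields a unique continuation once $Q(0-)$ is fixed, two distinct profiles must have distinct traces; relabeling if necessary I may assume $Q_1(0+)>Q_2(0+)$, which gives $Q_1(x)>Q_2(x)$ for all $x>0$ and, via the connecting condition \eqref{eq:connect}, the strict inequality $Q_1(0-)>Q_2(0-)$ at the left trace.

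It then remains to show $Q_1>Q_2$ throughout $x<0$, which I would do by contradiction. Set $\bar x:=\sup\{x<0:\ Q_1(x)\le Q_2(x)\}$ and suppose this set is nonempty. Continuity of $Q_1,Q_2$ on $x<0$ together with $Q_1(0-)>Q_2(0-)$ forces $\bar x<0$, $Q_1(\bar x)=Q_2(\bar x)=:q>0$, and $Q_1>Q_2$ on the nonempty interval $(\bar x,0)$. Evaluating \eqref{eq:Q3} at $\bar x$ for both profiles and cancelling the common positive factor $q$ gives $v(A(Q_1;\bar x))=v(A(Q_2;\bar x))$; since $v$ is strictly decreasing by \eqref{eq:vc}, this forces $A(Q_1;\bar x)=A(Q_2;\bar x)$. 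But the averaging window of $A(\,\cdot\,;\bar x)$ is $(\bar x,\bar x+h)$, on which $Q_1\ge Q_2$ holds throughout (strictly on $(\bar x,\min\{0,\bar x+h\})$, and non-strictly on the part lying in $x>0$), while the single jump point $x=0$ contributes nothing to the integral. Because $w>0$ on $(0,h)$, the strict inequality on a set of positive measure yields $A(Q_1;\bar x)>A(Q_2;\bar x)$, a contradiction. Hence no crossing occurs and $Q_1>Q_2$ on all of $x<0$, completing the proof.

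The step I expect to be the main obstacle is securing the strict inequalities, and in particular the strict monotonicity of $W$, which is genuinely essential: if $W$ had a flat interval at some level $c\in(\rho^-_*,\rho^+)$, two distinct shifts could touch on $x>0$ and the ordering would fail. I would establish strictness from \eqref{eq:OW}: a flat piece $W\equiv c$ on $[\alpha,\beta]$ forces $A(W;\cdot)$ constant there, hence $\int_0^h W'(\,\cdot+s)\,w(s)\,ds=0$ with $w>0$, which propagates flatness to $[\alpha,\infty)$ and contradicts $\lim_{x\to\infty}W=\rho^+\neq c$. The positivity $w>0$ on $(0,h)$ used above follows from $w'<0$ together with $w(h)=0$ in \eqref{eq:wc1}. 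An equivalent route, paralleling the proof of Theorem~\ref{th1}, is to verify at the discrete level that the map sending the right--hand data $\{Q_k\}_{k\ge i}$ to the solution $Q_{i-1}$ of \eqref{G} is monotone increasing (raising the data increases $A(Q^\dx;x_{i-1})$, lowers $v$, and thus pushes the unique zero of the increasing function $G$ to the right) and then pass to the limit $\dx\to0$; I would nonetheless keep the continuous first--crossing argument as the main proof, since it avoids tracking strictness through the backward induction.
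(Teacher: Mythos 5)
Your proof is correct and follows essentially the same route as the paper's: order the profiles on $x>0$ as shifts of the monotone profile $W$, transfer the ordering across $x=0$ via the connecting condition~\eqref{eq:connect}, and rule out a first crossing point $y<0$ by evaluating the flux relation there, using strict monotonicity of $v$ to force $A(Q_1;y)=A(Q_2;y)$ and contradicting $Q_1>Q_2$ on the averaging window. Your added care about strict monotonicity of $W$ and the positivity of $w$ on $(0,h)$ only fills in details the paper leaves implicit.
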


\begin{proof}
First we observe that on $x>0$, the profiles can not cross each other, since they are
horizontal shifts of the monotone profile $W(x)$. 
Now consider two distinct profiles $Q_1,Q_2$, such that 
$Q_1(x) > Q_2(x)$ on $x> 0$.
Then $Q_1(0+) > Q_2(0+)$, 
and by the connecting condition~\eqref{eq:connect}, we have also
$Q_1(0-) > Q_2(0-)$. 

We continue by contradiction, and 
assume that there exists a point $y<0$ such that
\begin{equation}\label{QQ}
Q_1(y)=Q_2(y), \quad \mbox{and}\quad  Q_1(x)>Q_2(x) \quad \forall x>y.
\end{equation}
Then, by equation~\eqref{eq:Q1} we have
\[
Q_1(y) \kappa^- v(A(Q_1;y)) = \bar f = Q_2(y) \kappa^- v(A(Q_2;y)),
\]
which implies
\[
v(A(Q_1;y))=v(A(Q_2;y)), \qquad \mbox{i.e.} \quad  A(Q_1;y) = A(Q_2;y)
\]
a contradiction to~\eqref{QQ}. We conclude that the graphs of $Q_1,Q_2$ 
does not cross each other.
\end{proof}

\medskip

\textbf{Stability.}
We now show the stability of these profiles, 
as local attractors for solutions of the Cauchy problems for (M1) with suitable initial data. 
When $\kappa(x)\equiv 1$ is a constant function, 
the existence and uniqueness of solutions for the Cauchy problems of~\eqref{eq:CL1}  
is established in~\cite{BG2016}. 
In particular, if the initial condition $\rho(0,x)$ is smooth in $x$, so is the solution 
$\rho(t,x)$ for $t>0$. 

However, if $\kappa(x)$ is discontinuous at $x=0$, then the solution 
$\rho(t,x)$ must have a jump at $x=0$ as well. 
A connecting condition, similar to~\eqref{eq:connect}, should be imposed:
\begin{equation}\label{eq:connect2}
\rho(t,0-) \kappa^- = \rho(t,0+) \kappa^+, \quad \forall t >0 . 
\end{equation}
We have the following definition.

\begin{definition}\label{def:sol1}
We say that a function $\rho(t,x)$ is a solution of~\eqref{eq:CL1} if
$\rho(t,x)$ satisfies~\eqref{eq:CL1} for all $x>0$ and $x<0$, 
and the connecting condition~\eqref{eq:connect2} at $x=0$,
for all $t> 0$. 
\end{definition}

We remark that, a general existence theorem for the Cauchy problem of (M1)
with discontinuous coefficient $\kappa(x)$ is not yet available in the literature.
One may speculate that the existence of solutions could be established 
by the vanishing viscosity approach,  a possible topic for future work. 
In this paper, assuming that solutions exist for the Cauchy problem,  
we establish the local stability of the traveling wave profiles.

Let $Q^\sharp(x)$ and $Q^\flat(x)$ be two profiles such that 
$Q^\sharp(x) > Q^\flat(x)$ for every $x$. 
Define $D$ as the region between $Q^\sharp$ and $Q^\flat$, i.e.,
\begin{equation}\label{eq:D}
D \; \dot=\; \left\{ (x,q):  Q^\flat(x) \le q\le Q^\sharp(x) \right\}.
\end{equation}

By Corollary~\ref{cor2},  all these profiles are ordered
and they do not intersect with each other. 
One can parametrize each profile by its trace $Q(0+) $. 
For every point $(x,q)\in D$ with $x\not=0$, 
there is a unique profile that passes through it.
Fix a time $t\ge 0$. 
For each function $\rho(t,x)$ with $(x,\rho(t,x))\in D$ and $x\not=0$, 
we define the mapping:
\begin{equation}\label{eq:Phi}
\Phi(t,x) \;\dot =\; \tilde Q(0+), \quad \mbox{where $\tilde Q(x)$ is a profile s.t.~}
\tilde Q(x)=\rho(t,x).
\end{equation}

We have the following stability Theorem. 

\begin{theorem}[Stability of the profiles]\label{th3}
Consider (M1) with $\kappa(x)$ given in~\eqref{kappa} and $\kappa^- >\kappa^+$. 
Let $\bar \rho(x)$ be the initial data, smooth except at $x=0$, satisfying
\begin{equation}\label{eq:prop0}
(x,\bar\rho(x)) \in D, \quad \forall x, 
\qquad\mbox{and}\quad
\kappa^-\bar \rho(0-)  = \kappa^+\bar \rho(0+) .
\end{equation}
Let $\rho(t,x)$ be the solution of the Cauchy problem for (M1) with initial data 
$\bar\rho(x)$,  following Definition~\ref{def:sol1}.
Then, we have
\begin{equation}\label{eq:pD}
(x,\bar\rho(t,x)) \in D, \quad \forall x, \forall t\ge 0 .
\end{equation}
Furthermore, let $\Phi(t,x)$ be the mapping for $\rho(t,x)$, 
as defined in~\eqref{eq:Phi}.  Then
\begin{equation}\label{eq:th}
\lim_{t\to\infty}  \left[\max_x\{\Phi(t,x)\} - \min_x\{\Phi(t,x)\}\right] =0. 
\end{equation}
\end{theorem}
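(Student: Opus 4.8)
The plan is to prove the stability statement in two stages: first establish the invariance property \eqref{eq:pD}, namely that the solution stays trapped in the region $D$ for all time, and then show the contraction property \eqref{eq:th} that the profiles eventually collapse to a single one. For the invariance, I would argue by a comparison principle. The key observation is that both $Q^\sharp$ and $Q^\flat$ are themselves stationary solutions of (M1), and the region $D$ is bounded above and below by these two stationary profiles. Because (M1) is a scalar conservation law (albeit nonlocal), one expects an $\mathbf{L}^1$-contraction or at least an order-preserving structure for solutions. Thus if $Q^\flat(x) \le \bar\rho(x) \le Q^\sharp(x)$ initially, and the connecting condition \eqref{eq:connect2} holds for all three, then the ordering should propagate in time, giving $Q^\flat(x) \le \rho(t,x) \le Q^\sharp(x)$ for all $t \ge 0$. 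First I would verify that a comparison principle is available for the notion of solution in Definition~\ref{def:sol1}, restricting attention to $x>0$ and $x<0$ separately where the flux is smooth, and checking that the connecting condition is compatible with the ordering at $x=0$.

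For the contraction property, the natural approach is to track the evolution of the functional $\Phi(t,x)$, which assigns to each point $(x,\rho(t,x))$ the label $\tilde Q(0+)$ of the unique profile passing through it. Since the profiles are totally ordered by Corollary~\ref{cor2} and foliate $D$, the quantity $\max_x \Phi(t,x) - \min_x \Phi(t,x)$ measures the ``spread'' of the solution across the family of profiles. I would introduce the extremal profiles $Q^{\max}(t,\cdot)$ and $Q^{\min}(t,\cdot)$ corresponding to $\max_x\{\Phi(t,x)\}$ and $\min_x\{\Phi(t,x)\}$, respectively. By the invariance argument applied to the tighter region between these two profiles, the solution remains sandwiched, so both $\max_x\{\Phi(t,x)\}$ is non-increasing and $\min_x\{\Phi(t,x)\}$ is non-decreasing in $t$. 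This monotonicity guarantees the limit in \eqref{eq:th} exists; the remaining task is to show it equals zero.

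To show the limit is actually zero, I would argue that if the spread stayed bounded below by some $\eta>0$, the solution would be pinned strictly between two distinct stationary profiles $Q^{\max}_\infty$ and $Q^{\min}_\infty$ forever. I would then seek to derive a contradiction by showing that the total mass (or some weighted integral) trapped between the solution and one of the extremal profiles must strictly decrease over time unless the solution already coincides with a profile. Concretely, one can estimate the time derivative of a Lyapunov-type quantity such as $\int (\rho(t,x) - Q^{\min}_\infty(x))\,dx$ over a suitable finite window, using the strict concavity of $f^\pm$ and the strict monotonicity established in Theorem~\ref{th1}, to conclude that equilibrium forces the solution onto a single profile. The strict inequality $f'' < 0$ together with the nonlocal averaging smoothing effect should prevent a nontrivial ``plateau'' of distinct labels from persisting.

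The main obstacle I anticipate is the rigorous justification of the comparison principle and the contraction estimate for the nonlocal equation with a discontinuous coefficient. Unlike the local scalar conservation law, the nonlocal flux couples the value at $x$ to an average over $[x,x+h]$, so the standard Kruzhkov doubling-of-variables argument does not apply directly, and the jump in $\kappa$ at $x=0$ introduces an interface where the usual entropy comparison must be supplemented by the connecting condition \eqref{eq:connect2}. Since the excerpt itself notes that even existence for the Cauchy problem with discontinuous $\kappa$ is not yet established, I would expect this step to rely on assuming enough regularity and well-posedness of the solution (as the theorem's hypotheses permit), and the delicate part will be verifying that the ordering is preserved across the interface $x=0$ and that no mass can ``escape'' the trapping region through the discontinuity. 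Making the final convergence quantitative, rather than merely qualitative, would be the hardest piece and may require a careful analysis of the decay rate tied to the exponential approach of profiles guaranteed by Lemma~\ref{lm:AL}.
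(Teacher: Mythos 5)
Your overall architecture --- trap the solution between two stationary profiles, show the extremal labels $\max_x\Phi$ and $\min_x\Phi$ are monotone in time, then argue the gap closes --- matches the paper's in spirit. But the load-bearing step in your plan is the sentence ``one expects an $\mathbf{L}^1$-contraction or at least an order-preserving structure,'' and that is precisely what cannot be taken for granted here: for conservation laws with nonlocal flux a comparison principle is not automatic (it is known to fail for non-monotone kernels), and no Kruzhkov-type theory is available for this equation, let alone with discontinuous $\kappa$. The paper does not invoke any such general principle. Instead it proves a concrete pointwise claim: if $y$ is a strict maximum of $\Phi(t,\cdot)$ (strict to the right of $y$), then $\Phi_t(t,y)<0$, and symmetrically at strict minima. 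The mechanism is to take the stationary profile $Q$ through $(y,\rho(t,y))$, note that strict maximality of the label forces the tangency $Q(y)=\rho(t,y)$, $Q'(y)=\rho_x(t,y)$ together with $Q(x)>\rho(t,x)$ for $x>y$, and then compute $\rho_t(t,y)=\bigl[Q\,v(A(Q))\bigr]_x-\bigl[\rho\,v(A(\rho))\bigr]_x$ term by term. The signs come out negative only because of the structural hypotheses $v'<0$, $v''\le 0$, $w'<0$ and $w(h)=0$ (the latter two give $A(Q;y)_x-A(\rho;t,y)_x>0$), plus a separate treatment of the interface point $y=0$ where the connecting condition~\eqref{eq:connect2} and one-sided derivatives enter. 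This explicit computation is the substance of the proof and is entirely absent from your proposal; without it, your invariance step is an assumption, not an argument.

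Your second stage also diverges: you propose a Lyapunov functional such as $\int\bigl(\rho(t,x)-Q^{\min}_\infty(x)\bigr)\,dx$ to rule out a persistent positive spread, whereas the paper deduces~\eqref{eq:th} directly from the strict differential inequalities $\Phi_t<0$ at strict maxima and $\Phi_t>0$ at strict minima. Your Lyapunov idea is not unreasonable as an alternative route (and would, if executed, address the legitimate worry that ``strictly decreasing'' does not by itself imply ``decreasing to zero''), but as written it is only a sketch: you would need to control the flux of the nonlocal quantity through $x=0$ and over the averaging window $[x,x+h]$, and nothing in the proposal indicates how. In short, the gap is that the one genuinely hard estimate --- the sign of $\rho_t$ at an extremal point of the label function, derived from the monotonicity of $w$ and the concavity of $v$ --- is deferred rather than supplied.
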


\begin{proof} We first observe that, 
since the initial data is smooth except at $x=0$, so is the solution 
for $t>0$.  
We now claim the following:
\begin{itemize}
\item 
If $\Phi(t,y)$ is a maximum point such that
\begin{equation}\label{eq:max}
\Phi(t,y) \ge \Phi(t,x) \quad \forall x, \qquad  \Phi(t,y) > \Phi(t,x) \quad \forall x>y, 
\end{equation}
then  $\Phi_t(t,y) <0$. 
\item
Symmetrically, if $\Phi(t,y)$ is a minimum point such that
\begin{equation}\label{eq:min}
\Phi(t,y) \le \Phi(t,x) \quad \forall x, \qquad  \Phi(t,y) < \Phi(t,x) \quad \forall x>y,
\end{equation}
then  $\Phi_t(t,y) >0$. 
\end{itemize}
This claim would imply both~\eqref{eq:pD} and~\eqref{eq:th}. 

We provide a proof for the case of a maximum point, while the minimum point
can be treated in a completely similar way. 
Fix a time $t\ge 0$, and let $y$ be a point that satisfies~\eqref{eq:max}. 
We discuss 3 cases, for different locations of $y$.

\textbf{(1).} 
Consider $y>0$ which satisfies~\eqref{eq:max}, and let $Q$ be the profile such that 
\[
Q(y) = \rho(t,y), \quad Q'(y)=\rho_x(t,y),\quad
\mbox{and}\quad
Q(x) > \rho(t,x) \quad \forall x>y.
\]
Since $v'<0$, this implies
\begin{equation} \label{eq:c1a}
A(Q;y) > A(\rho;t,y)  \qquad \mbox{and}\quad 
v(A(Q;y)) < v(A(\rho;t,y)).
\end{equation}
We also have 
\begin{eqnarray*}
A(Q;y)_x &=& Q(y+h)w(h) - Q(y)w(0) - \int_0^{h} Q(y+s)w'(s)\; ds, \\
A(\rho;t,y)_x &=&  \rho(t,y+h)w(h) - \rho(t,y)w(0) - \int_0^{h} \rho(t,y+s)w'(s)\; ds.
\end{eqnarray*}
Using $w(h)=0, Q(y)=\rho(t,y), w'(s)<0$ and $Q(y+s) > \rho(t,y+s)$, 
we get
\begin{equation}\label{eq:c1ba}
A(Q;y)_x - A(\rho;t,y)_x  >0. 
\end{equation}

We now compute:
\begin{eqnarray*}
&& \hspace{-1.5cm} (\kappa^+)^{-1} \rho_t(t,y) ~=~ - \left[ \rho(t,y)  v(A(\rho; t,y))\right]_x \\
&=& \left[ Q(y)  v(A(Q; y))\right]_x- \left[ \rho(t,y)  v(A(\rho; t,y))\right]_x \\
&=& Q'(y) \left[ v(A(Q; y)) - v(A(\rho; t,y))\right] 
+ Q(y) \left[v'(A(Q;y)) - v'(A(\rho;t,y)) \right] A(Q;y)_x \\
&&+~ Q(y) v'(A(\rho;t,y)) \left[A(Q;y)_x - A(\rho;t,y)_x \right] \\
&<&0.
\end{eqnarray*}
Here the last inequality holds  thanks to~\eqref{eq:c1a}-\eqref{eq:c1ba}.

\medskip

\textbf{(2).} Consider $y=0$.   
By the connecting conditions~\eqref{eq:connect} and~\eqref{eq:connect2}, we have
\[
Q(0-) = \rho(t,0-),  \qquad
Q(0+) = \rho(t,0+),
\]
Since the derivative $Q'(x)$ in~\eqref{eq:Q2} is the left limit,  we consider  $y=0-$. 
We have
\[
Q(0-) = \rho(t,0-),\qquad Q'(0-) \le  \rho_x(t,0-), \qquad 
Q(x) > \rho(t,x) ~\forall x>0.
\]
We have similar estimates as those in~\eqref{eq:c1a}-\eqref{eq:c1ba}, i.e.,
\begin{equation} \label{eq:cc1a}
A(Q;0-) > A(\rho;t,0-), \quad 
v(A(Q;0-)) < v(A(\rho;t,0-)), \quad
A(Q;0-)_x > A(\rho;t,0-)_x.
\end{equation}

We now compute, by using ~\eqref{eq:cc1a}
\begin{eqnarray*}
&& \hspace{-1.5cm} (\kappa^-)^{-1}\rho_t(t,0-) 
~=~ \left[ Q(0-)  v(A(Q; 0-))\right]_x- \left[ \rho(t,0-)  v(A(\rho; t,0-))\right]_x \\
&= & Q'(0-) \left[ v(A(Q; 0-)) - v(A(\rho; t,0-))\right] + v(A(\rho;t,0-) [Q'(0-)-\rho_x(t,0-)]\\
&& + Q(0-) \left[v'(A(Q;0-)) - v'(A(\rho;t,0-)) \right] A(Q;0-)_x \\
&&+ Q(0-) v'(A(\rho;t,0-)) \left[A(Q;0-)_x - A(\rho;t,0-)_x \right] \\
&<&0.
\end{eqnarray*}

\medskip

\textbf{(3).} 
If $y<0$, then  the averaging operator could possibly be taken over the jump location at
$x=0$.  Nevertheless, we still have 
\[
Q(y) = \rho(t,y), 
\quad Q'(y) = \rho_x(t,y),
\quad Q(x) > \rho(t,x) \quad \forall x>y.
\]
The rest of the computation remains the same as in the previous steps. 
\end{proof}

We perform a numerical simulation with Riemann initial data $(\rho^-,\rho^+)$,
and the plots are shown in Figure~\ref{fig:LFA1}. 
We observe that, the solution approaches a traveling wave profile as $t$ grows. 
Note that the Riemann initial data actually do not satisfy the assumptions in Theorem~\ref{th3},
indicating that Theorem~\ref{th3} probably applies to more general functions of initial data.

\begin{figure}
\begin{center}
\includegraphics[width=14cm]{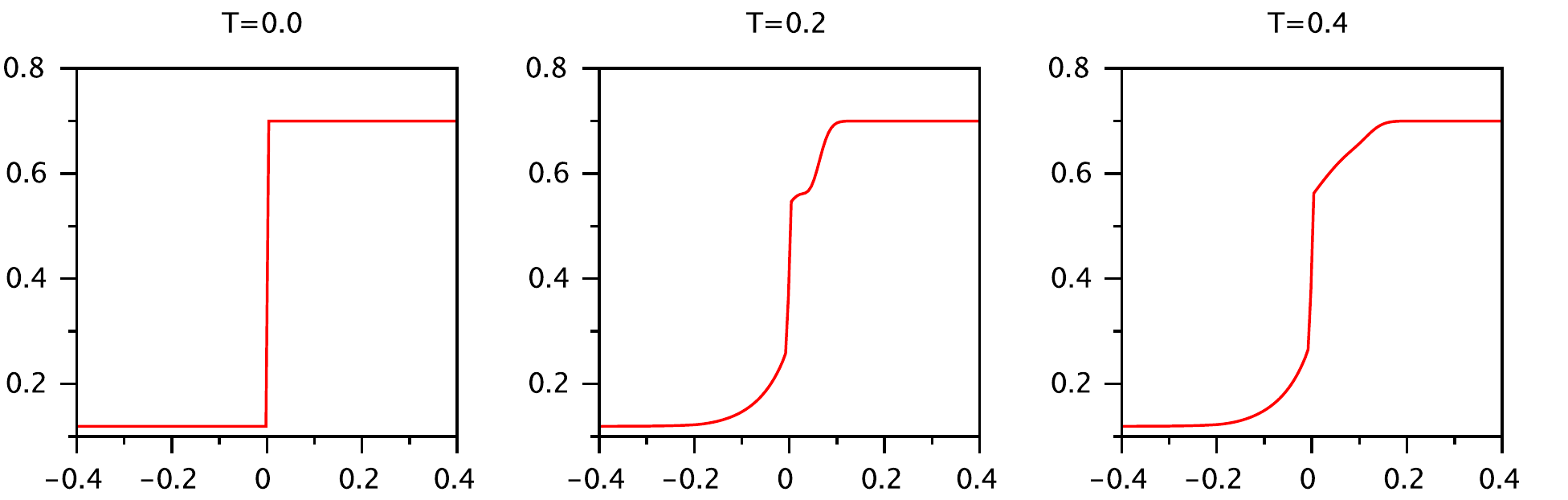}
\caption{Numerical simulation for model (M1) with Riemann initial data for Case A1.}
\label{fig:LFA1}
\end{center}
\end{figure}

\subsubsection{Case A2:  $ \rho^- < \rho^+\le \hat\rho$}
\label{sec:A2}

Since $\rho^+ \le \hat\rho$ is an unstable asymptote for $x\to\infty$, 
the solution on $x>0$ must be $Q(x)\equiv \rho^+$. 
At $x=0$  the connecting condition~\eqref{eq:connect} implies $Q(0-) < Q(0+)$, 
therefore the profile has an upward jump. 
Finally, the solution can be extended to $x<0$ by solving an initial value problem 
for~\eqref{eq:Q1}, with initial data given on $x>0$, and a jump at $x=0$. 
Using the same argument as in Theorem~\ref{th1} for Case~A1, we conclude that
this unique profile is monotone increasing on $x<0$. 
We have the following Theorem.

\begin{theorem}[Existence of a unique profile]\label{th2}
Let $\kappa^->\kappa^+$, and 
let $\rho^-,\rho^+$ be given 
such that $\rho^- < \rho^+ \le\hat\rho$
and~\eqref{eq:bf} holds. 
Then, there exists exactly one  stationary monotone profile $Q(x)$, 
monotone increasing on $x<0$, satisfying
\[
Q(x)\equiv \rho^+ \quad (x>0), \qquad \mbox{and} \quad 
\lim_{x\to-\infty} Q(x) = \rho^-.
\]
\end{theorem}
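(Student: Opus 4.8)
The plan is to follow the same three-region decomposition used in the proof of Theorem~\ref{th1} (the part $x>0$, the jump at $x=0$, and the backward construction on $x<0$), but to exploit the present hypothesis $\rho^+\le\hat\rho$, which makes the profile on $x>0$ rigid. This rigidity is precisely what upgrades the ``infinitely many'' conclusion of Theorem~\ref{th1} to a single profile here.

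First I would pin down the profile on $x>0$. On this half-line $\kappa\equiv\kappa^+$ is constant, so \eqref{eq:Q1} reduces to the homogeneous equation \eqref{eq:OW} with $\bar\kappa=\kappa^+$, and any admissible $Q$ must be a bounded monotone solution with $\lim_{x\to+\infty}Q(x)=\rho^+$. I would then invoke part (i) of Lemma~\ref{lm:AL}: since $\rho^+\le\hat\rho$, the asymptote $\rho^+$ is not approached at an exponential rate, i.e.\ it is unstable as $x\to+\infty$. Consequently no nonconstant bounded monotone profile can converge to $\rho^+$, and the only possibility is the constant $Q(x)\equiv\rho^+$. This is the crux of the argument, and it is exactly what yields a single profile rather than a one-parameter family.

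Next I would read off the jump at $x=0$. The connecting condition \eqref{eq:connect} gives $Q(0-)=(\kappa^+/\kappa^-)\,\rho^+$; since $\kappa^+<\kappa^-$ we obtain $Q(0-)<\rho^+\le\hat\rho$, an upward jump whose left trace is completely determined by $\rho^+$. With this Cauchy datum at $x=0-$ I would construct the profile on $x<0$ by the discretization of Steps~3--4 in the proof of Theorem~\ref{th1}: on each mesh interval solve \eqref{G} for $Q_{i-1}$; the sign computation $G(Q_i)>0>G(0)$ together with $G'>0$ for $\dx$ small yields a unique root with $0<Q_{i-1}<Q_i$, so the interpolant is positive and monotone increasing, and letting $\dx\to0$ produces a solution of \eqref{eq:Q1} on $x<0$. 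Nothing in that part of the argument used $\rho^+>\hat\rho$, so it transfers verbatim.

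Finally I would identify the limit and close uniqueness. Being monotone increasing and bounded above by $Q(0-)<\hat\rho$, the limit $\rho^-_\infty:=\lim_{x\to-\infty}Q(x)$ exists and, since $\bar f>0$, lies in $(0,\hat\rho)$; passing to the limit in \eqref{eq:Q1} gives $f^-(\rho^-_\infty)=\bar f=f^-(\rho^-)$, and as $f^-$ is strictly increasing on $[0,\hat\rho]$ with both $\rho^-_\infty,\rho^-$ in that interval, injectivity forces $\rho^-_\infty=\rho^-$. Because each choice along the way---the constant on $x>0$, the trace $Q(0-)$, and the backward solution---was forced, the profile is unique. I expect the only genuinely delicate point to be the rigidity claim on $x>0$: one must ensure that ``unstable asymptote'' really excludes \emph{every} nonconstant bounded monotone solution, not merely those seen by a linearization, for which monotonicity and the two-root structure of $f^+(\rho)=\bar f$ (with $\rho^+\le\hat\rho$) should be combined with Lemma~\ref{lm:AL}.
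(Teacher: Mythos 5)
Your proposal is correct and follows essentially the same route as the paper: the paper likewise argues that $\rho^+\le\hat\rho$ is an unstable asymptote as $x\to+\infty$ (forcing $Q\equiv\rho^+$ on $x>0$), then applies the connecting condition~\eqref{eq:connect} at $x=0$ and reuses the backward construction of Steps~3--5 of Theorem~\ref{th1} on $x<0$. Your write-up is in fact more detailed than the paper's three-sentence sketch, notably in identifying the limit $\rho^-$ via injectivity of $f^-$ on $[0,\hat\rho]$ and in flagging the rigidity claim on $x>0$ as the one point requiring care.
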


See Figure~\ref{fig:A2} for a sample profile.

\begin{figure}[htbp]
\begin{center}
\includegraphics[width=7cm,height=4cm,clip,trim=2mm 1mm 9mm 5mm]{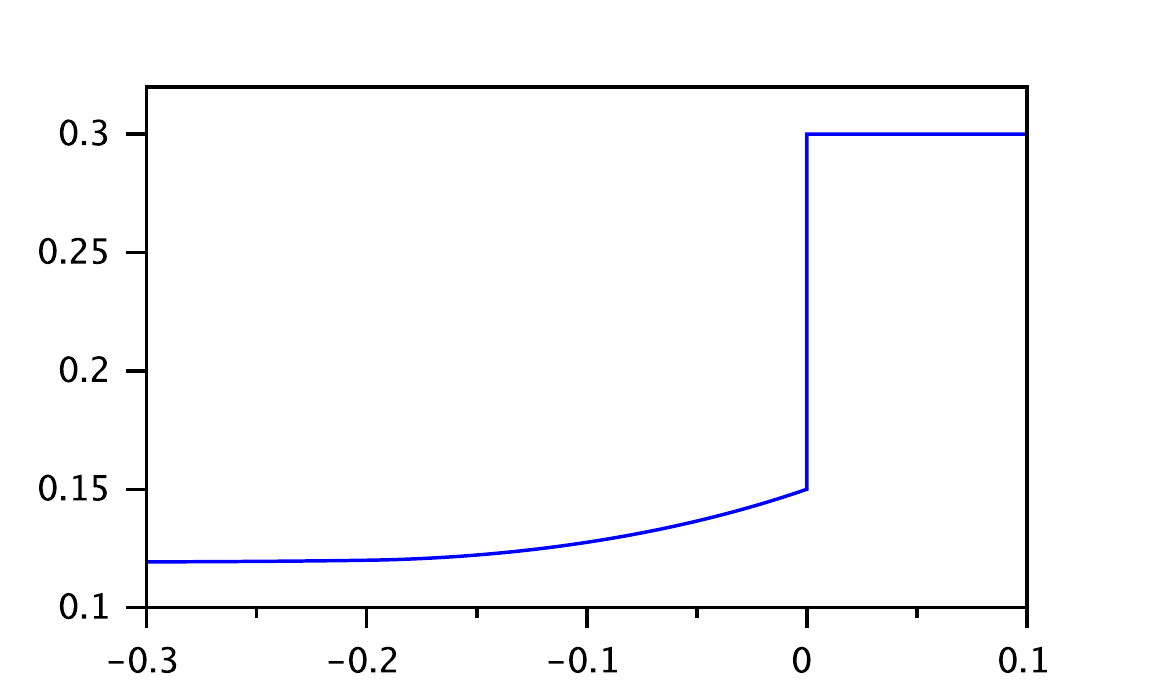}
\caption{Typical traveling wave profile for Case A2}
\label{fig:A2}
 
 \bigskip
 
\includegraphics[width=13cm,clip,trim=0mm 1mm 5mm 2mm]{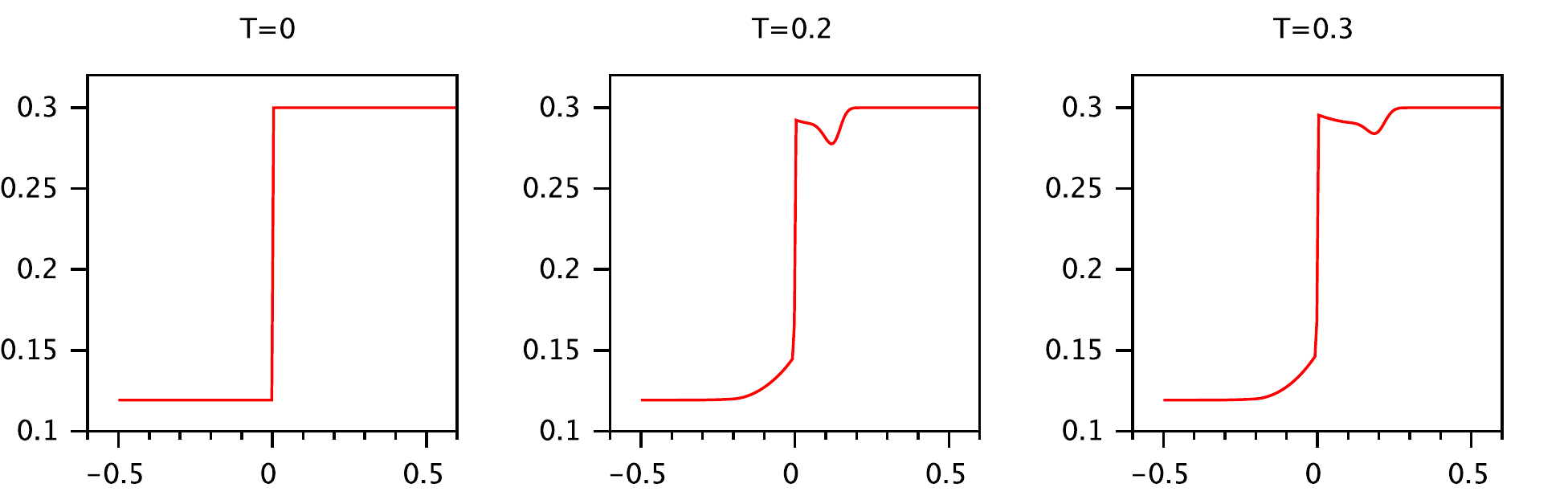}
\caption{Numerical simulation for the PDE model with Riemann initial data for Case A2.}
\label{fig:LFA2}

\bigskip

\includegraphics[width=13cm,clip,trim=0mm 1mm 5mm 2mm]{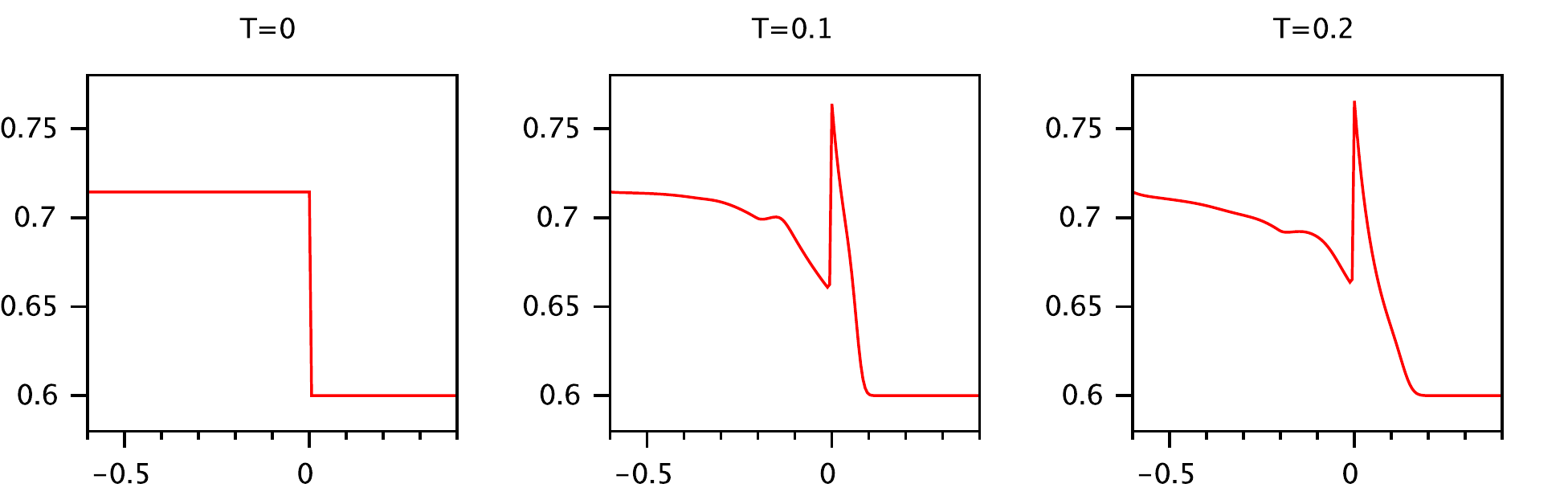}
\caption{Numerical simulation for the PDE model with Riemann initial data for Case A3.}
\label{fig:LFA3}

\bigskip

\includegraphics[width=13cm,clip,trim=0mm 1mm 5mm 2mm]{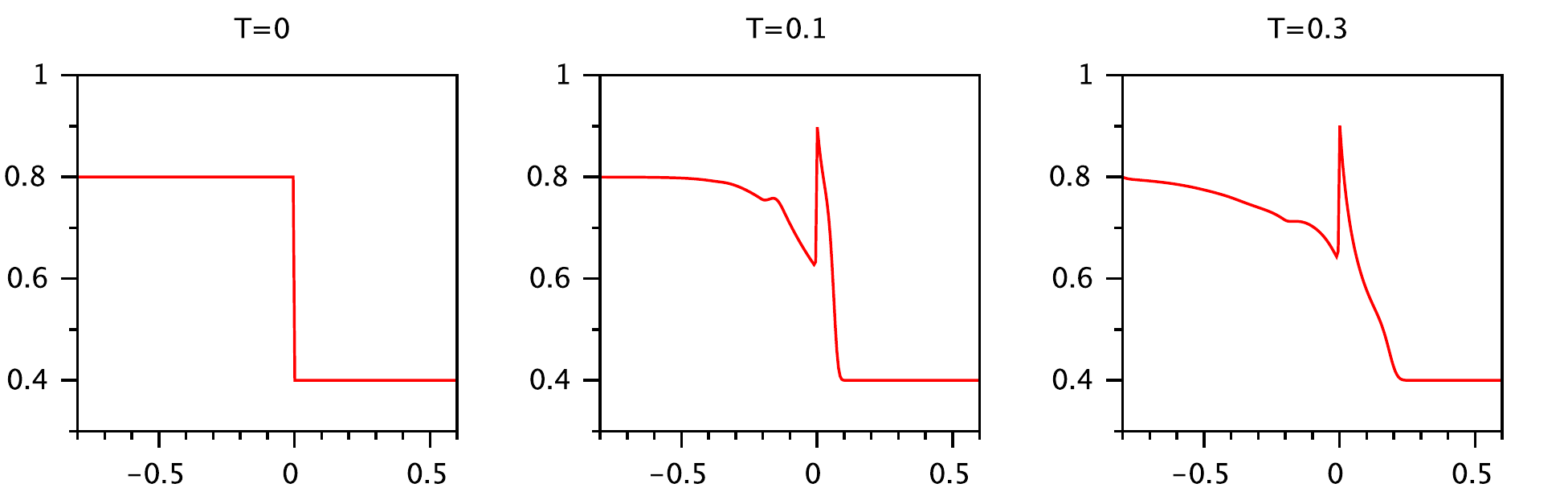}
\caption{Numerical simulation for the PDE model with Riemann initial data for Case A4.}
\label{fig:LFA4}
\end{center}
\end{figure}

Since $\rho^+\le\hat\rho$  is an unstable asymptote for $x\to \infty$,
the profile is not a local attractor for solutions of the Cauchy problems for (M1). 
In fact, any perturbation that enters the region $x>0$ will persist,
as verified  by a numerical simulation  in Figure~\ref{fig:LFA2}. 
We observe that, with Riemann initial data, 
an oscillation is formed around the origin and then travels into the region $x>0$,
where it travels further to the right as $t$ grows.

\subsubsection{Case A3:  $\hat\rho\le \rho^+<\rho^-$}
\label{sec:A3}

Since $\rho^- > \hat \rho$,
the asymptote $\rho^-$ is unstable as $x\to-\infty$. 
If a profile shall exists, it must be constant $\rho^-$ on $x<0$.
There exists no profile on $x>0$ that can be connected to this constant solution.
In conclusion, no stationary profiles exist for this case.

A numerical simulation with Riemann initial data is 
performed, and results are plotted in Figure~\ref{fig:LFA3}. 
We observe that oscillations are formed around $x=0$, 
which travel to the left in the region $x<0$,
where they persist as $t$ grows.

\subsubsection{Case A4:  $ \rho^+ < \hat\rho < \rho^-$}
\label{sec:A4}

Since both $\rho^-$ and $\rho^+$ are unstable, there does not exist any profiles. 
We present a numerical simulation with Riemann data, and plot the results 
in Figure~\ref{fig:LFA4}.
We observe rather wild behavior. 
Oscillations are formed around $x=0$, and then 
enter both regions of $x>0$ and $x<0$, and they persist as $t$ grows.

\subsection{Case B:  $\kappa^- < \kappa^+$}
\label{sec:B}

Fix $\bar f > 0 $ which lies in the ranges of $f^-$ and $f^+$, 
there exist unique values $\rho_{1},\rho_2,\rho_3,\rho_4$ such that
\[
\rho_1 < \rho_2 \le \hat\rho\le \rho_3 <\rho_4,\qquad
f^+(\rho_1)=f^+(\rho_4) =\bar f = f^-(\rho_2)=f^-(\rho_3).
\]
Similar to Case A, there are 4 sub-cases: 
\begin{itemize}
\item[B1.] We have $\rho^- =\rho_2, \rho^+=\rho_4$ and $\rho^-<\hat\rho<\rho^+$; 
\item[B2.] We have $\rho^- =\rho_1, \rho^+=\rho_2$ and $\rho^+<\rho^-\le\hat\rho$; 
\item[B3.] We have $\rho^- =\rho_3, \rho^+=\rho_4$ and $\hat\rho\le \rho^-<\rho^+$; 
\item[B4.] We have $\rho^- =\rho_1, \rho^+=\rho_3$ and $\rho^+<\hat\rho<\rho^-$.
\end{itemize}

Note that
since $\kappa^- < \kappa^+$,  the connecting condition~\eqref{eq:connect}
implies that $Q(0-)>Q(0+)$, thus $Q(x)$ has a downward jump at $x=0$.
This means that the profiles are no longer monotone increasing. 
Furthermore, since we seek profiles with $Q(0-)\le 1$, 
this imposes a restriction  on the trace $Q(0+)$,
\begin{equation}\label{eq:Q+c}
Q(0+) \le \frac{\kappa^-}{\kappa^+}. 
\end{equation}

As previously, we let $W(x)$ 
denote a stationary profile where $\kappa(x)\equiv\kappa^+$, discussed in
Section~\ref{sec:review}.
Below, we discuss each sub-case in detail. 
We remark that the overall framework of the discussions  is similar to that of Case A,
but details (especially for Case B1) are rather different, due to the lack of monotonicity.

\subsubsection{Case B1:  $\rho^-<\hat\rho<\rho^+$}
\label{sec:B1}


\begin{theorem}\label{tm:B1}
Given $\kappa^- <\kappa^+$, and $\rho^-,\rho^+$ satisfying
\[
\rho^- < \hat\rho <\rho^+, \qquad f^-(\rho^-) = f^+(\rho^+) >0,
\]
there exist infinitely many solutions $Q(x)$ 
to~\eqref{eq:Q3} which satisfy the ``boundary conditions''
\[
\lim_{x\to-\infty} Q(x) = \rho^-, \quad \lim_{x\to+\infty} Q(x) = \rho^+.
\]
All profiles are monotone increasing on $x>0$, but some profiles may be oscillatory on $x<0$.
\end{theorem}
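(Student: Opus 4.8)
The plan is to build each profile by prescribing its restriction to $x>0$ and then extending it backward across the jump at $x=0$. On $x>0$ the coefficient is the constant $\kappa^+$ and the target asymptote is $\rho^+>\hat\rho$, which is stable by Lemma~\ref{lm:AL}; hence, exactly as in Step~1 of Theorem~\ref{th1}, the profile on $x>0$ must be either the constant $\rho^+$ or a horizontal shift of the monotone profile $W(x)$ supplied by Theorem~\ref{th:O1}, and in either case it is continuous and monotone increasing with $\lim_{x\to+\infty}Q(x)=\rho^+$. This family is parametrized by the trace $Q(0+)$, which ranges over $(\rho_1,\rho^+]$, where $\rho_1<\hat\rho$ is the smaller root of $f^+(\rho)=\bar f$. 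The connecting condition~\eqref{eq:connect} then fixes $Q(0-)=(\kappa^+/\kappa^-)Q(0+)$, which is a downward jump since $\kappa^-<\kappa^+$, and the requirement $Q(0-)\le 1$ is precisely~\eqref{eq:Q+c}. To see that infinitely many admissible traces survive, I would check that $\rho_1<\kappa^-/\kappa^+$: since $\kappa^+\rho_1 v(\rho_1)=\bar f=\kappa^-\rho^- v(\rho^-)$ one has $(\kappa^+/\kappa^-)\rho_1=\rho^- v(\rho^-)/v(\rho_1)<\rho^-<1$, because $\rho_1<\rho^-$ and $v$ is decreasing. Thus $Q(0+)$ may be chosen freely in the nontrivial interval $(\rho_1,\min\{\rho^+,\kappa^-/\kappa^+\}]$, giving infinitely many candidate profiles.

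Next I would extend each candidate to $x<0$ by solving~\eqref{eq:Q3} as an initial value problem backward in $x$, reusing the discretization of Steps~3--4 of Theorem~\ref{th1}: on the mesh $x_i=i\,\dx$ one determines $Q_{i-1}$ from $G(Q_{i-1})=Q_{i-1}\,v(A(Q^{\dx};x_{i-1}))-\bar f/\kappa^-=0$. The estimates $G(0)=-\bar f/\kappa^-<0$ and $G'(Q_{i-1})>0$ for $\dx$ small carry over unchanged, since they only use $v'<0$ together with the $\O(1)\cdot\dx$ smallness of $\partial A(Q^{\dx};x_i)/\partial Q_{i-1}$; hence each step has a unique root and the scheme is well defined as long as that root stays in $(0,1)$, after which $\dx\to0$ yields a solution of~\eqref{eq:Q1} on $x<0$. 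The one feature that does \emph{not} survive from Case~A1 is monotonicity: because the jump is now downward, the inequality $A(Q^{\dx};x_{i-1})<A(Q^{\dx};x_i)$ used in Theorem~\ref{th1} to force $Q_{i-1}<Q_i$ may fail, so $Q(x)$ need not be monotone on $x<0$.

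The main difficulty is therefore to replace the monotonicity argument by genuine a priori bounds. Differentiating~\eqref{eq:Q1} on $x<0$ gives $Q'(x)=-Q(x)v'(A(Q;x))A(Q;x)_x/v(A(Q;x))$, so that $\mathrm{sign}\,Q'=\mathrm{sign}\,A(Q;x)_x$ with $A(Q;x)_x=-Q(x)w(0)+\int_0^h Q(x+s)\abs{w'(s)}\,ds$; this makes precise that the profile is increasing exactly where the downstream values dominate and turns over otherwise, and one expects genuine oscillation once $Q(0-)>\hat\rho$. I would first establish the uniform bound $0<Q^{\dx}(x)<1$ by showing that the weighted average stays below $A^\ast\;\dot=\;v^{-1}(\bar f/\kappa^-)>\rho^-$, so that $G(1)>0$ and the root of $G$ is trapped in $(0,1)$; this is where the bulk of the work lies, because near $x=0$ the large trace $Q(0-)$ feeds into the averages. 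Once boundedness is secured, convergence follows as in Step~5 of Theorem~\ref{th1}: every limit point of $Q$ as $x\to-\infty$ must satisfy $\rho\,v(\rho)=\bar f/\kappa^-$, and by Lemma~\ref{lm:AL} only $\rho^-<\hat\rho$ is a stable asymptote, forcing $Q(x)\to\rho^-$.

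The hardest step, and the genuine departure from Case~A1, is precisely this global a priori bound together with the exclusion of persistent (undamped) oscillations as $x\to-\infty$ in the absence of monotonicity. The natural tool is a linearization of the delay equation about $\rho^-$: writing $Q=\rho^-+u$ and retaining first order terms leads to $u(x)v(\rho^-)+\rho^- v'(\rho^-)\int_0^h u(x+s)w(s)\,ds=0$, whose characteristic equation $\int_0^h e^{\lambda s}w(s)\,ds=-v(\rho^-)/(\rho^- v'(\rho^-))$ has right-hand side exceeding $1$ exactly because $\rho^-<\hat\rho$; this yields a single relevant real root $\lambda^\ast>0$ driving the exponential decay $e^{\lambda^\ast x}\to0$ as $x\to-\infty$, while the remaining complex roots would have to be controlled to show that the transient oscillations are damped. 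I would combine this local contraction near $\rho^-$ with the boundedness estimate to trap the backward solution and drive it into the basin of $\rho^-$, thereby completing the existence of infinitely many profiles that are monotone on $x>0$ and, for the larger admissible traces $Q(0+)$ with $Q(0-)>\hat\rho$, oscillatory on $x<0$.
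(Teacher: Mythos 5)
Your setup on $x>0$, the use of the connecting condition, the verification that the range of admissible traces $Q(0+)$ is nonempty, and the backward discretization with $G(0)<0$ and $G'>0$ all match the paper's argument, and you correctly identify that the loss of monotonicity on $x<0$ is what distinguishes this case from Case A1. However, the two steps you yourself flag as ``where the bulk of the work lies'' and as requiring the complex characteristic roots ``to be controlled'' are precisely the steps that constitute the proof here, and your proposed substitutes do not close them. First, the a priori bound: you want $G(1)>0$ via the estimate that the weighted average stays below $v^{-1}(\bar f/\kappa^-)$, but the relation $Q\,v(A(Q;\cdot))=\bar f/\kappa^-$ only yields $A<v^{-1}(\bar f/\kappa^-)$ \emph{after} one knows $Q\le 1$, so as sketched the argument is circular. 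Second, the linearization about $\rho^-$ is purely local: it cannot show that the backward solution ever enters a neighborhood of $\rho^-$, and you concede that the complex roots of the characteristic equation are left uncontrolled, so neither boundedness nor convergence is established.

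The paper closes both gaps with a single global, elementary mechanism that is absent from your proposal. Since $w(h)=0$ gives $w(0)=-\int_0^h w'(s)\,ds$, one has $A(Q;x)_x=\int_0^h\left[Q(x+s)-Q(x)\right](-w'(s))\,ds$ with $-w'>0$, and $\mathrm{sign}\,Q'=\mathrm{sign}\,A(Q;x)_x$. Hence at any local maximum $y<-h$, where $A(Q;y)_x=0$, there must exist $x^*\in(y,y+h)$ with $Q(x^*)>Q(y)$: every local maximum is strictly dominated by a value to its right. This produces a leftward-decreasing sequence of local maxima and therefore a monotone upper envelope converging to $\rho^-$, which simultaneously gives the uniform bound (starting from $Q(0-)\le 1$) and the damping of the oscillations. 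The paper pairs this with a lower envelope $Q^\flat$, obtained by solving the backward problem with data $Q\equiv\rho_1$ on $x>0$ (your computation $(\kappa^+/\kappa^-)\rho_1=\rho^-v(\rho^-)/v(\rho_1)<\rho^-$ is exactly the one showing $Q^\flat(0-)<\rho^-$, whence $Q^\flat$ is monotone and stays below $\rho^-$), and with the ordering property of Corollary~\ref{cor2} to squeeze all remaining profiles between $Q^\flat$ and $Q^\sharp$. One smaller omission: your trace interval includes $Q(0+)=\rho^+$, i.e.\ the constant solution on $x>0$, which the paper explicitly excludes because its backward extension increases as $x$ decreases and reaches $1$ at some finite $x<0$, so it cannot be continued.
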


\begin{proof} 
The proof takes several steps.

\textbf{(1)} 
On $x>0$, $Q(x)$ could  be either the constant function $Q(x)\equiv \rho^+$
or some horizontal shift of $W(x)$. 
We first claim that the constant solution 
$Q(x)\equiv \rho^+$  on $x>0$ will not give a profile on $x<0$. 
Indeed, by  the connecting condition~\eqref{eq:connect} we have $Q(0-) > Q(0+)=\rho^+$. 
Then one can easily show that $Q'(0-) <0$, and furthermore 
$Q'(x)<0$ for $x<0$. Thus, $Q(x)$ reaches 1 for some finite $x<0$, and the solution 
can not be continued further as $x$ becomes smaller.

In conclusion, on $x>0$, $Q(x)$ is some horizontal shift of $W(x)$. 
At $x=0$, the trace $Q(0+)$ takes value between $\rho_1$ and $\rho^+$, 
where $\rho_1<\rho^-< \hat\rho<\rho^+$ and 
$f^+(\rho_1)=f^+(\rho^+)\;\dot=\;\bar f>0$.  

\medskip

\textbf{(2)}  
Furthermore, we consider only the profiles where $Q(0+)$ satisfies~\eqref{eq:Q+c}. 
At $x=0$, the connecting condition~\eqref{eq:connect} applies, 
which determines the value for the trace $Q(0-)$.
We then solve an initial value problem for $Q(x)$ on $x<0$, 
with initial data given on $x\ge 0$.  
We expect to have  infinitely many profiles. 
From the same argument as in Corollary~\ref{cor2}, 
all profiles are ordered and will never cross each other. 

\medskip

\textbf{(3)}   Utilizing the same analysis as in Step 3-4 of Theorem~\ref{th1},
we have that, if a profile $Q(x)$ is monotone on $[-h,0]$, then
it remains monotone for $x<-h$. 

\medskip

\textbf{(4)}  We now construct a lower envelope $Q^\flat$ for all the profiles on $x<0$,
by solving the initial value problem with initial data 
$Q(x)\equiv \rho_1< \rho^-$ for $x>0$.
Note that any horizontal shift of the profile $W(x)$ 
satisfies $W(x) > \rho_1$ for $x>0$. 
Since the profiles can not cross each other, we conclude that, 
any profiles with $Q(x)=W(x)$ on $x>0$  will lie above $Q^\flat$. 

We claim that, on $x<0$ the profile  $Q^\flat$ is monotone decreasing 
and it lies below $\rho^-$. 
Indeed, since $\kappa^+\rho_1 v(\rho_1) = \kappa^- \rho^- v(\rho^-)=\bar f$,
the connecting condition~\eqref{eq:connect}  gives
\[Q(0-) = \frac{\kappa^+Q(0+)}{\kappa^-}
= \frac{\kappa^+\rho_1}{\kappa^-} =
\frac{\kappa^- \rho^- v(\rho^-)}{v(\rho_1) \kappa^- }
= \frac{ \rho^- v(\rho^-)}{v(\rho_1)  } < \rho^-.\]
Then, by~\eqref{eq:Q2},  we have
\[ (Q^\flat)'(0-)     v(A(Q^\flat;0)) = -Q(0-)  v'(A(Q^\flat;0)) A(Q^\flat;0-)_x <0,\]
where the last inequality holds thanks to $Q(0-) > Q^\flat(h)=\rho_1$ 
and we have $A(Q^\flat;0-)_x <0$.
A contradiction  argument  shows  that $(Q^\flat)'(x) <0$  for every $x<0$. 
Thus, $Q^\flat$ is monotone decreasing on $x<0$. 

To show that $Q^\flat(x) < \rho^-$ on $x<0$, we proceed with contradiction. 
We assume that there exists a $y<0$ 
such that $Q^\flat(y)=\rho^-$ and $Q^\flat(x) < \rho^-$ for $x>y$. 
By~\eqref{eq:Q1} we compute
\[
Q^\flat(y) \kappa^- v(A(Q^\flat;y)) = \bar f = \kappa^- \rho^- v(\rho^-)
\]
thus
\[
v(A(Q^\flat;y)) = v(\rho^-) \quad \rightarrow \quad A(Q^\flat;y) = \rho^-,
\]
a contradiction which proofs our claim.

Finally, by the same argument as in Step 5 in the proof of Theorem~\ref{th1}
one concludes that the profile $Q^\flat(x)$ approaches $\rho^-$ asymptotically
as $x\to -\infty$. 

\medskip

\textbf{(5)} 
By the ordering of the profiles,  any profile $Q(x)$ with $Q(0-)>\rho_1$, if it exists,
would lie above $Q^\flat(x)$. 
However $Q(x)$ might lose monotonicity and become oscillatory around $\rho^-$
on $x<0$.
If this happens, we claim  that the local max of an oscillating solution 
is decreasing and  approaches $\rho^-$ as $x\to -\infty$. 

Indeed, by~\eqref{eq:Q1},  on $x<0$ we have  
\[ 
Q'(x) = Q(x) \frac{-v'(A(Q;x))}{v(A(Q;x))} A(Q;x)_x.
\]
Thus, $Q'(x)$ has the same sign as $A(Q;x)_x$. 
By the assumption $w(h)=0$, we  have
$w(0) = -\int_0^h w'(s)\; ds$.
Thus, we compute, for $x<0$
%
\[
A(Q;x)_x = \int_0^{h} [Q(x+s) - Q(x)] (-w'(s)) \; ds. 
\]
Since $w'<0$, we conclude that, if $A(Q;x')_x =0$ for some $x'$, then 
on $[x',x'+h]$ we have either $Q(x)\equiv Q(x')$ or $Q(x)$ oscillated around $Q(x')$.
In particular, this implies that if $y<-h$ is a local max with $Q(y) > \rho^-$, 
such that $A(Q;y)_x =0$, 
then we must have $Q(x^*) > Q(y)$ for some $x^*\in (y,y+h)$.
Therefore, there exists a local max on the left of $y$, with a higher max value. 

Thus, there exists a sequence of local maxima $y_k$ 
with 
\[ y_{k+1} < y_k, \qquad Q(y_k) >\rho^-, \qquad Q(y_{k+1})  < Q(y_k)\qquad 
\forall k. \]
The sequence might be finite or infinite. 
We conclude that there exists an increasing function on $x<0$ which serves as an 
upper envelope for this oscillatory profile. 
Since the  flux equals $\bar f$, this envelope  approaches $\rho^-$ asymptotically 
as $x\to -\infty$. 


By continuity there exists an upper profile $Q^\sharp(x)$, whose graph lies
between $Q^\flat(x)$ and the upper envelope. 
In particular, $Q^\sharp(x)$ approaches the limit $\rho^-$ as $x\to-\infty$.

We conclude that, 
in between the profiles $Q^\flat(x)$ and $Q^\sharp(x)$,  
there exist infinitely many profiles for $Q(x)$. 
These profiles never cross each other, and they all approach $\rho^-$ as $x \to -\infty$.
\end{proof}

Sample profiles for Case B1 are given in Figure~\ref{fig:B1}. 
By a similar argument as for Case A1, one concludes that these profiles are time asymptotic
limits for solutions of the Cauchy problems for (M1).  We omit the details of the proof. 
Result of a numerical simulation  is given in Figure~\ref{fig:LFB1},
with Riemann initial data.
We observe that the solution approaches a certain traveling wave profile as $t$ grows.

\begin{figure}[htbp]
\begin{center}
\includegraphics[width=7.5cm,clip,trim=5mm 1mm 12mm 6mm]{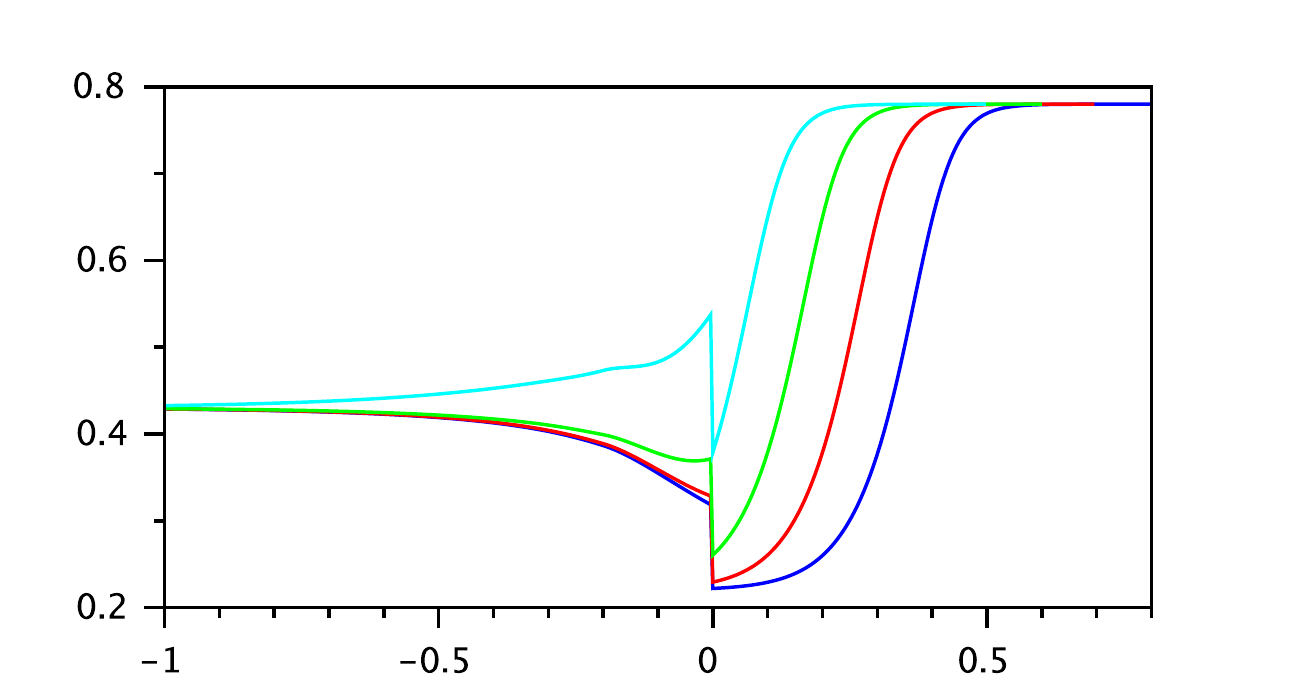}
\caption{Sample traveling waves for Case B1. }
\label{fig:B1}

\bigskip

\includegraphics[width=13cm,clip,trim=0mm 1mm 5mm 1mm]{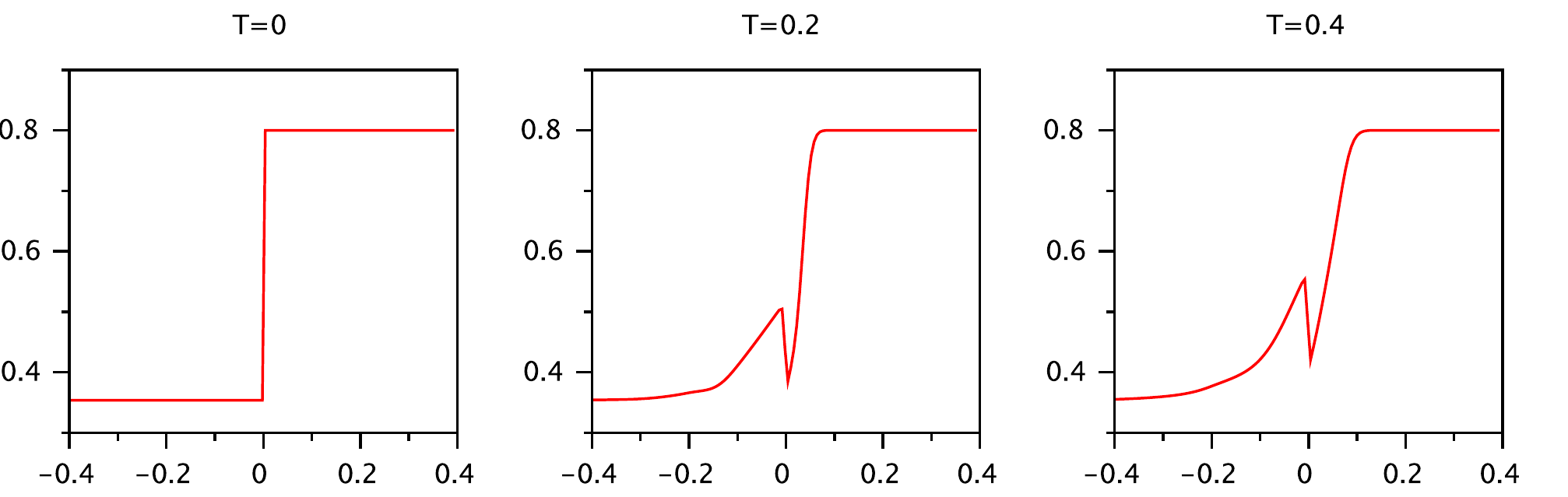}
\caption{Numerical simulation for the PDE model with Riemann initial data for Case B1.}
\label{fig:LFB1}
\end{center}
\end{figure}

\subsubsection{Case B2, Case B3, and Case B4}
\label{sec:B234}

For Case B2, we have $\rho^+<\rho^-\le\hat\rho $, which 
is the counter part for Case A2. 
Since $\rho^+ \le\hat\rho$ is an unstable asymptote as $x\to\infty$, 
we must have $Q(x) \equiv \rho^+$ on $x>0$. 
At $x=0$ we apply the connecting condition~\eqref{eq:connect} to get $Q(0-)$. 
We then solve an initial value problem on $x<0$.
By the same arguments as for Case A2
we  prove the existence of a monotone decreasing profile on $x<0$.
A sample profile is illustrated in Figure~\ref{fig:B2}.
Unfortunately, such an profile is not a local attractor for the solutions
of the Cauchy problem for (M1).
Result of a numerical simulation is given in Figure~\ref{fig:LFB2}, 
with Riemann initial data. 
We observe that an oscillation is formed around $x=0$, which travels into
the region $x>0$, where it persists as $t$ grows.

For case B3, we have $\hat\rho\le \rho^-<\rho^+$
and for Case B4 we have $ \rho^+<\hat\rho<\rho^-$. 
These are the counter parts for Case A3 and Case A4 respectively, 
and there are no profiles. 
We plot the results of a numerical simulation in Figure~\ref{fig:LFB3} for Case B3
and in Figure~\ref{fig:LFB4} for Case B4,
and observe the oscillations in solutions which persist in time.

\begin{figure}[htbp]
\begin{center}
\includegraphics[width=7cm,clip,trim=5mm 1mm 12mm 6mm]{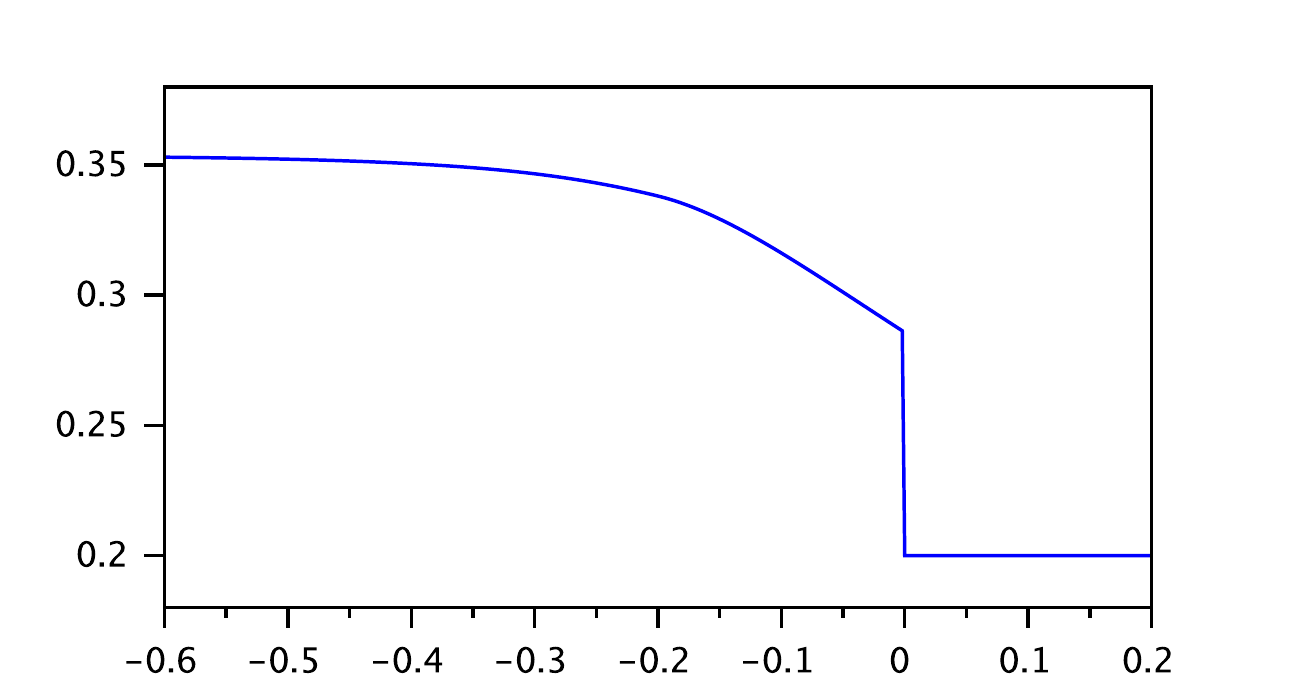}
\caption{Sample traveling wave for Case B2.}
\label{fig:B2}

\bigskip

\includegraphics[width=13cm,clip,trim=0mm 1mm 5mm 1mm]{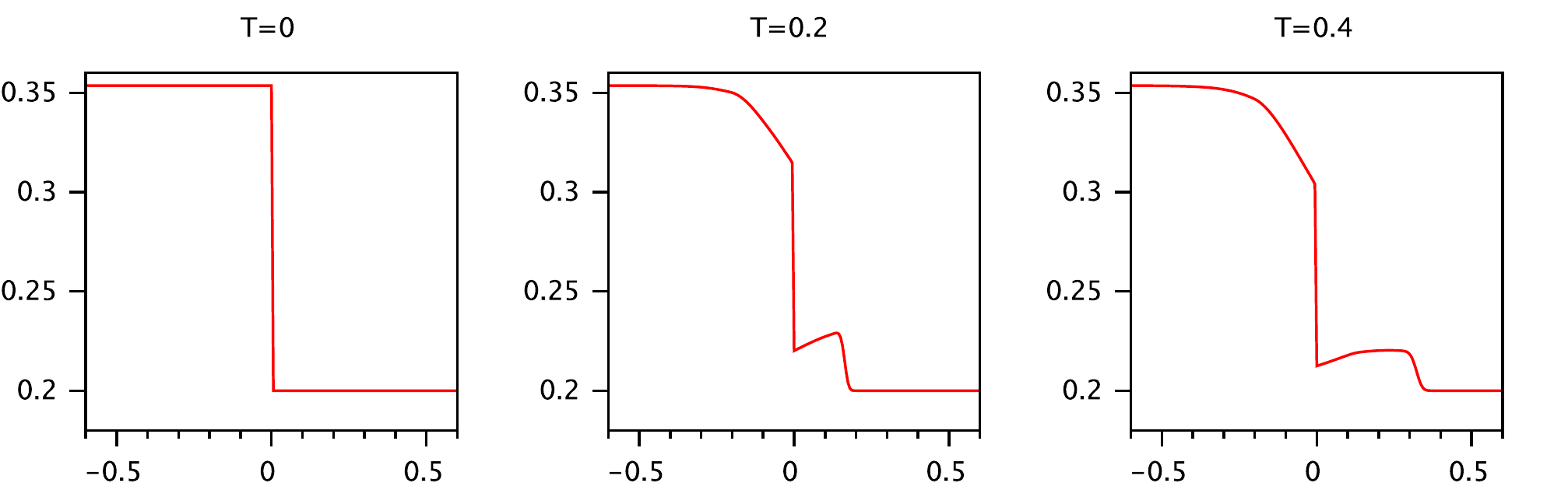}
\caption{Numerical simulation for the PDE model with Riemann initial data for Case B2.}
\label{fig:LFB2}

\bigskip

\includegraphics[width=13cm,clip,trim=0mm 1mm 5mm 1mm]{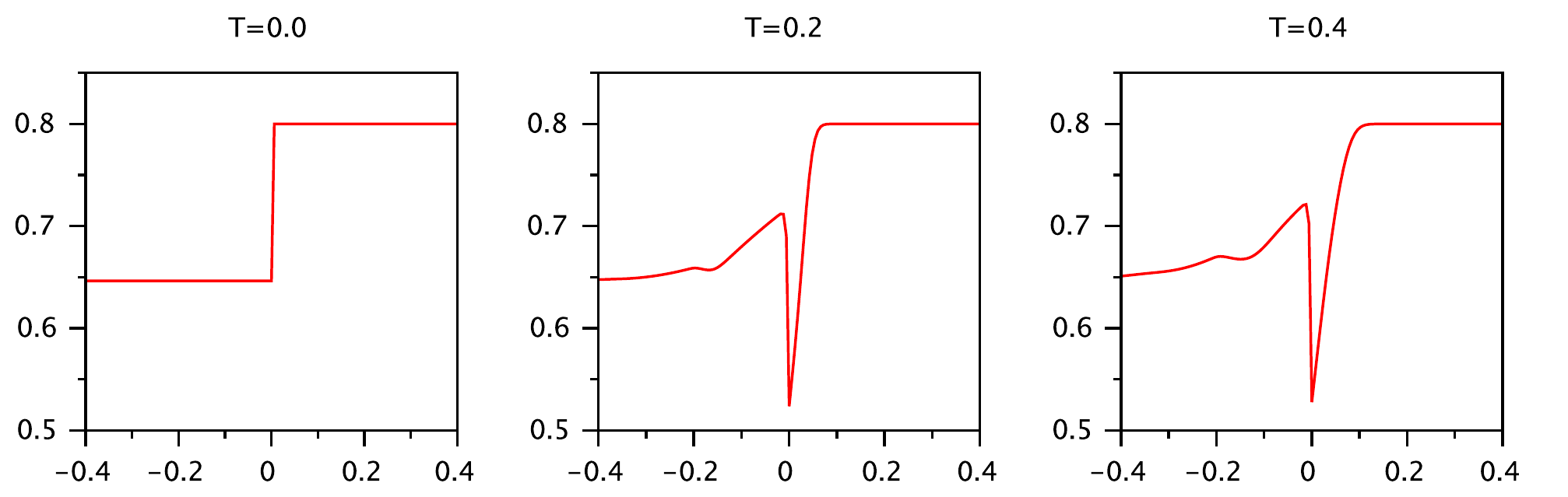}
\caption{Numerical simulation for the PDE model with Riemann initial data for Case B3.}
\label{fig:LFB3}

\bigskip

\includegraphics[width=13cm,clip,trim=0mm 1mm 5mm 1mm]{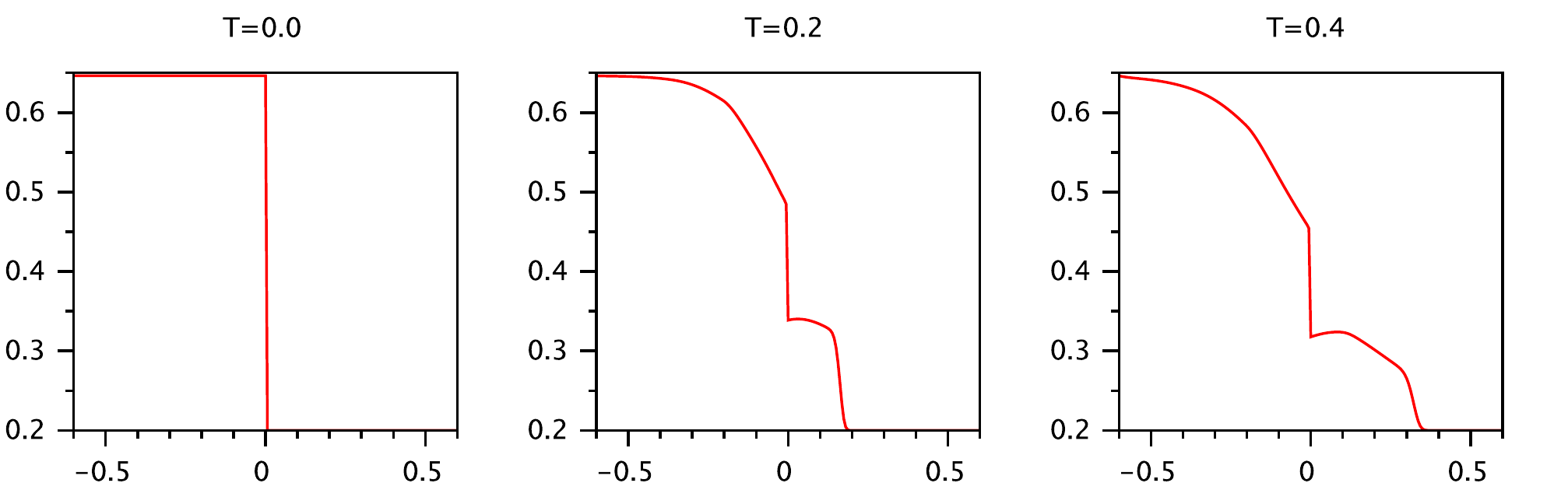}
\caption{Numerical simulation for the PDE model with Riemann initial data for Case B4.}
\label{fig:LFB4}
\end{center}
\end{figure}

\section{Stationary traveling wave profiles for (M2)}\label{sec:M2}
\setcounter{equation}{0}

Let $P(x)$ be a stationary profile for (M2). 
It satisfies the equation
\begin{equation}\label{C3-0}
P(x) \cdot \int_x^{x+h} \kappa(y) v( P(y)) w(y-x) \; dy \equiv \bar f ,
\end{equation}
where 
\[
\bar f = \lim_{x\to\pm\infty} P(x) \cdot \int_x^{x+h} \kappa(y) v( P(y)) w(y-x) \; dy.
\]
In the case where $\lim_{x\to\pm\infty} P(x) = \rho^\pm$ and $\kappa(x)$ 
satisfies~\eqref{kappa}, we have 
\[
\bar f= f^-(\rho^-)=f^+(\rho^+). 
\]
Denote now 
\[
V(x) \; \dot=\;\int_x^{x+h} \kappa(y) v( P(y)) w(y-x) \; dy ,
\]
\eqref{C3-0} can written as
\begin{equation}\label{P}
P(x) V(x) \equiv \bar f \qquad \implies \quad P'(x) V(x) + P(x) V'(x)=0.
\end{equation}

We see that 
$V(x)$ is Lipschitz continuous even with discontinuous $P(x)$. 
This implies that $P(x)$ is also Lipschitz continuous, but with a kink at $x=0$.
We compute
\begin{eqnarray*}
V'(x) 
&=& \int_0^h \left[ \kappa(x+s) v(P(x+s))_x + (\kappa^+-\kappa^-)\delta_0(x+s) v(P(x+s))  \right] w(s) \; ds,
\end{eqnarray*}
where $\delta_0(x)$ denote the Dirac delta function with unit mass concentrated at $x=0$.
Writing it out with piecewise details, we have
\begin{equation}\label{V}
V'(x) = \begin{cases}
\displaystyle \kappa^+ \cdot \int_0^h   v(P(x+s))_x  w(s) \; ds & (x>0) \\[2mm]
\displaystyle  \kappa^-  \int_0^{-x}  v(P(x+s))_x  w(s) \; ds 
+ (\kappa^+-\kappa^-) v(P(0)) w(-x)  \\
\displaystyle \qquad  + ~\kappa^+  \int_{-x}^h   v(P(x+s))_x  w(s)ds & (-h <x<0)\\[2mm]
\displaystyle \kappa^- \cdot \int_0^h   v(P(x+s))_x  w(s) \; ds & (x<-h) 
\end{cases} 
\end{equation}
Since $w(h)=0$, then $V'(x)$ is continuous at $x=-h$. 
At $x=0$,  $V'(x)$ has a jump
\[
V'(0+) - V'(0-) = (\kappa^- - \kappa^+) v(P(0)) w(0).
\]

In summary, we seek stationary profiles $P(x)$ which satisfies the equation
\begin{equation} \label{C3} 
P(x) \cdot \int_x^{x+h} v( P(y)) w(y-x) \; dy = \begin{cases} \bar f/\kappa+ , & (x>0)\\
\bar f/\kappa^-, & (x<0) \end{cases}, 
\end{equation}
and the connecting condition at $x=0$:
\begin{equation}\label{connectC}
P'(0+)-P'(0-) = (\kappa^+-\kappa^-)  \frac{P(0)}{V(0)} v(P(0)) w(0).
\end{equation}


\paragraph{Previous results.} 
Consider the simpler case where $\kappa(x)\equiv \bar\kappa$ is a constant function,
and let $\cW(x)$ be a stationary profile for (M2). 
Then, $\cW(x)$ satisfies the integral equation
\begin{equation}\label{cW}
\cW(x) \cdot \int_x^{x+h} v( \cW(y)) w(y-x) \; dy = \frac{\bar f }{\bar\kappa}.
\end{equation} 
By the results in~\cite{RidderShen2018}, Lemma~\ref{lm:AL}  and Theorem~\ref{th:O1}
in Section~\ref{sec:review} hold  for the profile $\cW(x)$.

\medskip

Below we  consider two cases, where Case C is for $\kappa^->\kappa^+$,
and Case D for $\kappa^-<\kappa^+$. 
We assume that $\bar f>0$, since the cases with $\bar f=0$ are trivial.

\subsection{Case C: $\kappa^->\kappa^+$}
Similar to Case A, we let $\rho_{1},\rho_2,\rho_3,\rho_4$ 
be the unique values that satisfies~\eqref{eq:cA}. 
Then, we have 4 sub-cases of C1, C2, C3, C4, which are the corresponding cases 
of A1, A2, A3, A4, respectively. 
We also denote by $\cW(x)$ the stationary profile with $\kappa(x)\equiv\kappa^+$ and
$\lim_{x\to\infty} \cW(x) = \rho^+$.

\subsubsection{Case C1:  $\rho^- < \hat \rho < \rho^+$}


\begin{theorem}\label{th:C1}
Let $\kappa^->\kappa^+$ and let $\rho^-,\rho^+$ be given such that 
$0<\rho^-< \hat\rho < \rho^+<1$, and assume that~\eqref{eq:ff} holds. 
Then, there exist infinitely many stationary monotone profiles $P(x)$ 
which satisfy the equation~\eqref{C3} and the boundary conditions~\eqref{eq:BC}.
At $x=0$ the profiles $P(x)$ are continuous but not differentiable. 
\end{theorem}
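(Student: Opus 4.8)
The plan is to run the five-step scheme of Theorem~\ref{th1}, adjusted for the two features that separate (M2) from (M1): the profile $P$ is continuous across $x=0$ with only a kink (recorded in \eqref{connectC}), and the speed limit $\kappa(y)$ now sits \emph{inside} the nonlocal average $V(x)$ rather than being sampled at the driver's location. Because $V$ is Lipschitz and $PV\equiv\bar f$, the function $P=\bar f/V$ is continuous at the origin throughout, which is why $P$ will only have a kink and not a jump.

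First I would describe the solutions on $x>0$. Since $\rho^+>\hat\rho$ is the stable asymptote at $+\infty$ (Lemma~\ref{lm:AL}), the restriction of $P$ to $x>0$ is either the constant $\rho^+$ or a horizontal shift of the monotone profile $\cW$ associated with $\kappa^+$ from Theorem~\ref{th:O1}; these are parametrized by the trace $P(0+)$, which sweeps the interval $(\rho^-_*,\rho^+]$, where $\rho^-_*<\hat\rho$ is the companion value with $f^+(\rho^-_*)=f^+(\rho^+)=\bar f$. This already supplies infinitely many candidate right states, each continuous and monotone increasing on $x>0$. By continuity at the origin we set $P(0-)=P(0+)=:P_0$; comparing $f^-(\rho^-)=f^+(\rho^-_*)=\bar f$ with $\kappa^->\kappa^+$ and the monotonicity of $\rho\mapsto\rho v(\rho)$ below $\hat\rho$ then forces $\rho^-<\rho^-_*<P_0$, which is the strict lower bound that will keep the leftward continuation above $\rho^-$.

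Next I would extend the profile to $x<0$ by solving an initial value problem backward in $x$, exactly as in Steps~3--4 of Theorem~\ref{th1}: discretize as in \eqref{eq:mesh}, reconstruct $P^\dx$ by linear interpolation, and generate $P_{i-1}$ from the downstream values by solving $G(P_{i-1}):=P_{i-1}V_{i-1}-\bar f=0$, with $V_{i-1}$ the discrete analogue of the integral in \eqref{V}. One checks $G(0)=-\bar f<0$, $G(P_i)>0$ and $G'(P_{i-1})>0$ for $\dx$ small, the self-interaction term $\partial V_{i-1}/\partial P_{i-1}$ being $\O(1)\cdot\dx$ and carrying the sign of $v'<0$, while $V_{i-1}\ge\kappa^+v(\rho^+)>0$ stays bounded below; this gives a unique root with $0<P_{i-1}<P_i$, hence a monotone increasing limit solving \eqref{C3} on $x<0$. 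The hard part will be the transition layer $-h<x<0$, where the averaging window straddles the jump in $\kappa$ and $V_{i-1}$ is the genuinely mixed integral of \eqref{V}, including the boundary contribution $(\kappa^+-\kappa^-)v(P(0))w(-x)$; there the sign verifications $G(P_i)>0$ and $G'>0$ must be redone. The saving structure is that, as the window shifts left, the sampled density drops (raising $v(P)$) \emph{and} weight $\kappa^+$ is replaced by the larger $\kappa^-$, so for $\kappa^->\kappa^+$ both effects increase $V$ and thus decrease $P=\bar f/V$, reinforcing monotonicity rather than threatening it; for $x<-h$ the integrand collapses to the constant $\kappa^-$ and the estimate is identical to Theorem~\ref{th1}.

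Finally, monotonicity together with the bound $\rho^-<P<P_0$ gives a limit $\ell=\lim_{x\to-\infty}P(x)$ with $\kappa^-\ell\, v(\ell)=\bar f=f^-(\rho^-)$; Lemma~\ref{lm:AL}(ii) selects the stable root below $\hat\rho$, so $\ell=\rho^-$ and the boundary conditions \eqref{eq:BC} hold. An ordering argument identical to Corollary~\ref{cor2}---two profiles meeting at a point would have equal averages $V$ there, contradicting distinctness---shows the family indexed by $P(0+)$ never crosses, so it consists of infinitely many distinct profiles. Continuity at $x=0$ is the Lipschitz property of $V$, while non-differentiability follows from the jump $V'(0+)-V'(0-)=(\kappa^--\kappa^+)v(P(0))w(0)\neq0$, which through $P'=-PV'/V$ produces precisely the kink of \eqref{connectC}.
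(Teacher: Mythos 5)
Your proposal is correct and follows essentially the same route as the paper: parametrize the right half by horizontal shifts of $\cW$, continue backward for $x<0$ via a discretized nonlinear equation with the sign checks $G(0)<0$, $G(P_i)>0$, $G'>0$ (the paper likewise exploits that $\kappa(\cdot)$ decreasing for $\kappa^->\kappa^+$ reinforces $V^\Delta(x_{k-1};P_k)>V^\Delta(x_k)$ across the transition layer), then pass to the limit and identify $\rho^-$ as the asymptote. The only cosmetic difference is that you write the defect against $\bar f$ rather than against $P_kV^\Delta(x_k)$, which coincide by the inductive construction.
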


\begin{proof}  The proof takes several steps.

\textbf{Step 1.} 
Since $\rho^+>\hat\rho$ is a stable asymptote for $x\to\infty$, then
on $x\ge 0$ the stationary profile can be either the constant function 
$P(x)\equiv \rho^+$, or some horizontal shift of the profile $\cW(x)$. 
In all cases, the profile is smooth and monotone. 
With this ``initial data'' given, the profile $P(x)$ on $x<0$ can be obtained by solving
an initial value problem of~\eqref{C3} backward in $x$.

\medskip

\textbf{Step 2.} 
We now construct an approximate solution to the initial value problem 
in a similar way as in the proof of Theorem~\ref{th1} (Step 3). 
We discretize the region $x<0$ with a uniform mesh with size $\Delta x$,
and denote the grid point by $x_i = i \Delta x$, for  
$i\in\mathbb{Z}^-$.
The discrete values $P_i\approx P(x_i)$ are approximate solutions.
Using the discrete values, we construct 
an approximate profile $P^\Delta(x)$ as the linear interpolation
of the discrete value on $x<0$.
%

Given $P_i$ for $i\ge k$, we construct $P_{k-1}$ 
by solving a nonlinear equation
\begin{equation}\label{eq:NN}
\mathcal{G}(P_{k-1}) =0, \qquad \mbox{where}\quad 
\mathcal{G}(P_{k-1}) \;\dot=\; P_{k-1} V^\Delta (x_{k-1}; P_{k-1}) -  
P_{k} V^\Delta (x_{k}),
\end{equation}
where $V^\Delta $ is the discrete average velocity
\begin{eqnarray*}
V^\Delta (x_{k}) 
&=&  \int_{x_k}^{x_k+h} \kappa(y) v( P^\Delta(y)) w(y-x_k) \; dy ,
\\
V^\Delta (x_{k-1}; P_{k-1}) 
&=& \int_{x_{k-1}}^{x_{k-1}+h}\kappa(y)  v( P^\Delta(y;P_{k-1})) w(y-x_{k-1}) \; dy.
\end{eqnarray*}
Note that we mark the dependence on $P_{k-1}$ in the corresponding terms.
In particular, we write $P^\Delta(x;P_{k-1})$ on $x\in[x_{k-1},x_k]$ where
\[
P^\Delta(x;P_{k-1})\, \dot= \, P_{k-1}\frac{x_k - x}{x_k-x_{k-1}}    +P_k \frac{x-x_{k-1}}{x_k-x_{k-1}}.
\]

\medskip

\textbf{Step 3.} 
Assume that $P(x)$ is monotone increasing on $x\ge x_k$.
We claim that, if $\Delta x$ is sufficiently small, 
then the nonlinear equation~\eqref{eq:NN} has a unique solution
of $P_{k-1}$, satisfying $0<P_{k-1}<P_k$. 
Indeed, we have
\[\mathcal{G}(0) = - P_k V^\Delta (x_k) <0 .\]
Furthermore, if we let $P_{k-1}=P_k$, then $P^\Delta(x)$ is constant on 
the interval $x\in[x_{k-1},x_k]$, and therefore monotone increasing for $x\ge x_{k-1}$.
Then the mapping $x\mapsto v(P^\Delta(x))$ is decreasing for $x\ge x_{k-1}$. 
Since $\kappa^->\kappa^+$, then $\kappa(x)$ is also monotone decreasing.
Thus the averages satisfy 
\[V^\Delta(x_{k-1}; P_k) > V^\Delta (x_k).\]
This gives us 
\[
\mathcal{G}(P_k) =P_k \cdot \left[ V^\Delta(x_{k-1}; P_k) - V^\Delta (x_k)\right] >0 ,
\]
and we conclude that there exists a solution of $P_{k-1}$ satisfying $0<P_{k-1}<P_k$. 

In order to show that the solution is unique,  we compute the derivative
\[
\mathcal{G}'(P_{k-1})
= V^\Delta(x_{k-1};P_{k-1}) + P_{k-1} 
\frac{\partial V^\Delta (x_{k-1};P_{k-1}) }{\partial {P_{k-1}}} 
\]
where 
\begin{eqnarray*}
\frac{\partial V^\Delta (x_{k-1};P_{k-1})}{\partial {P_{k-1}}}
&=& 
\kappa^- \int_{x_{k-1}}^{x_k} 
\frac{\partial v}{\partial P_{k-1}} 
\left(P_{k-1}\frac{x_k - y}{\Delta x}    +P_k \frac{y-x_{k-1}}{\Delta x} \right) \cdot
w(y-x_{k-1})\; dy\\
&=& \kappa^- \int_{x_{k-1}}^{x_k}  v'\left(P^\Delta(y)\right) \frac{x_k - y}{\Delta x}  \cdot
w(y-x_{k-1})\; dy
~=~
-\mathcal{O}(1) \Delta x.
\end{eqnarray*}
Therefore, for $\Delta x$ sufficiently small we have
\[
 \mathcal{G}'(P_{k-1})  = V(x_{k-1}) - P_{k-1}  \mathcal{O}(1) \Delta x >0.
\]
Thus the mapping $P_{k-1}\mapsto \mathcal{G}$ is monotone and the root is unique,
proving the claim.

\medskip

\textbf{Step 4.}  
Set $P_0=\cW(0)$.  By the above construction, 
we generate $P_i$ for $i=-1,-2, \cdots$, with $0< P_i < P_{i+1} < P_0$, 
and thus a positive monotone approximate solution $P^\Delta(x)$ on $x\le 0$.
By the construction, we have 
\[
P^\Delta(x_i) V^\Delta (x_i) = \bar f, \qquad \forall i \le 0.
\]
Taking the limit $\dx\to 0$, the sequence of approximate monotone 
solutions $P^\Delta(x)$
converges to a limit function $P(x)$, which satisfies 
$ P(x) V(x) = \bar f$ for every $x\le 0$. 
Thus, $P(x)$ is a solution to~\eqref{C3}, and 
$\lim_{x\to-\infty} P(x) = \rho^-$. 

Finally, for each ``initial data'' (i.e., some horizontal shift of $\cW(x)$) there exists a unit 
profile $P(x)$, therefore we obtain infinitely many profiles. 
\end{proof}

Sample profiles are plotted in Figure~\ref{fig:C1}. We observe that they do not cross each 
other. The ordering property in Corollary~\ref{cor2} holds for these profile, 
with a very similar proof. 

\medskip

\textbf{Stability.} 
In the case when $\kappa(x)\equiv \bar\kappa$ is a constant function, 
the existence and well-posedness of solution for (M2) is established in~\cite{Friedrich2018},
through convergence of approximate solutions generated by a Godunov-type scheme. 
Unfortunately, when $\kappa(x)$ is discontinuous, no existence result
is yet available.
Assuming that the solutions exist, we show that they converge to the 
traveling wave profile as $t$ grows, under mild assumptions on the initial data.

\begin{definition}\label{def:M2}
We say that $\rho(t,x)$ is a solution of~\eqref{eq:CL2} if $\rho(t,x)$ is continuous
and satisfies the equation~\eqref{eq:CL2}
for all $x<0$ and $x>0$, and the connecting condition
\begin{equation}\label{CCC2}
\rho_x(0+,t)-\rho_x(0-,t) =  \frac{(\kappa^+-\kappa^-) \rho(0,t)  v(\rho(0,t)) w(0) }{\kappa^+ \int_0^h v(\rho(y,t)) w(y) dy}
\qquad \forall t>0. 
\end{equation}
\end{definition}

With a very similar argument as those for Case A1, 
one can prove that the profiles $P(x)$ are time asymptotical 
limit for the Cauchy problem of~\eqref{eq:CL2}.  We omit the details of the proof, 
and state  the following stability Theorem.

\begin{theorem}\label{th:C3n}
Let $\rho(x,t)$ be the solution to the Cauchy problem for~\eqref{eq:CL2} 
with initial data $\rho(x,0)$, with $\kappa^- >\kappa^+$. 
Assume that the initial data satisfies the connecting condition~\eqref{CCC2}
and 
$P^\flat (x) \le \rho(x,0) \le P^\sharp(x)$,
for some profiles $P^\flat$ and $P^\sharp$. 
Then, the solution $\rho(x,t)$ converges to a profile $P(x)$ as $t\to\infty$.
\end{theorem}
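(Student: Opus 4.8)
The plan is to mirror the proof of the stability Theorem~\ref{th3} for Case A1, the one structural difference being that here the profiles $P(x)$ are continuous across $x=0$ (with a kink) rather than jumping, which in fact simplifies the geometry. For any profile $P$ write $V_P(x)\;\dot=\;\int_x^{x+h}\kappa(y)v(P(y))w(y-x)\,dy$ and analogously $V_\rho(t,x)$ for the solution, so that the profile equation \eqref{C3-0} reads $P(x)V_P(x)\equiv\bar f$ and (M2) reads $\rho_t=-[\rho\,V_\rho]_x$. Using the ordering of the profiles (the analogue of Corollary~\ref{cor2}, valid here by the remark following Theorem~\ref{th:C1}), the region $D=\{(x,q):P^\flat(x)\le q\le P^\sharp(x)\}$ is foliated by profiles, and one defines $\Phi(t,x)=\tilde P(0+)$, where $\tilde P$ is the unique profile with $\tilde P(x)=\rho(t,x)$. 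As in Theorem~\ref{th3}, the whole statement reduces to the single claim that at any maximum point $y$ of $x\mapsto\Phi(t,x)$ in the sense of \eqref{eq:max} one has $\Phi_t(t,y)<0$, and symmetrically $\Phi_t(t,y)>0$ at a minimum point satisfying \eqref{eq:min}. This makes $\max_x\Phi$ non-increasing and $\min_x\Phi$ non-decreasing, forcing the oscillation to collapse and $\rho(t,\cdot)$ to converge to a single profile.

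The heart of the matter is the sign computation. Fix such a maximum point and let $P$ be the touching profile, so that $P(y)=\rho(t,y)$, $P'(y)=\rho_x(t,y)$ and $P(x)>\rho(t,x)$ for all $x>y$. For $y\ne0$ the profile satisfies $[P\,V_P]_x=0$ on its smooth side, hence $\rho_t(t,y)=[P\,V_P]_x-[\rho\,V_\rho]_x$ at $y$; expanding and using $P(y)=\rho(t,y)$, $P'(y)=\rho_x(t,y)$ gives
\[
\rho_t(t,y)=\rho_x(t,y)\,[V_P(y)-V_\rho(t,y)]+\rho(t,y)\,[(V_P)_x(y)-(V_\rho)_x(t,y)].
\]
Because $v'<0$ and $P(x)>\rho(t,x)$ for $x>y$, the first bracket is negative, while $\rho_x(t,y)=P'(y)\ge0$ by the monotonicity from Theorem~\ref{th:C1}, so the first term is $\le0$. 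For the second bracket, integration by parts together with $w(h)=0$ yields $(V_P)_x(y)-(V_\rho)_x(t,y)=-\int_0^h\kappa(y+s)\,[v(P(y+s))-v(\rho(t,y+s))]\,w'(s)\,ds$, whose integrand is a product of a negative factor and $w'<0$ (with $\kappa(y+s)>0$ throughout, so the straddling case $y\in(-h,0)$ needs no special care); the bracket is thus strictly negative, and since $\rho(t,y)>0$ the second term is $<0$. Hence $\rho_t(t,y)<0$, and therefore $\Phi_t(t,y)<0$. Note that, in contrast with (M1), no use of $v''\le0$ is needed, since $V$ is linear in $v(\rho)$.

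The delicate case, which I expect to be the main obstacle, is $y=0$, where $P$ has a kink and the averaging window straddles the jump in $\kappa$. Here one works with the left trace $0-$: from the one-sided touching one gets $P'(0-)\le\rho_x(t,0-)$, while $[P\,V_P]_x(0-)=0$ still holds. Organizing the expansion as above now produces an additional term $[P'(0-)-\rho_x(t,0-)]\,V_\rho(t,0)$, which is $\le0$ since $V_\rho(t,0)>0$, the remaining two terms being signed exactly as before (one checks $(V_P)_x(0-)-(V_\rho)_x(t,0-)=-\kappa^+\int_0^h[v(P(s))-v(\rho(t,s))]w'(s)\,ds<0$). One must also verify that computing along the left trace is legitimate, i.e.\ that $\rho_t(t,0-)=\rho_t(t,0+)$; this is precisely what the connecting condition \eqref{CCC2} enforces, as a direct calculation using $(V_\rho)_x(t,0+)-(V_\rho)_x(t,0-)=(\kappa^--\kappa^+)v(\rho(t,0))w(0)$ shows. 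With all three contributions $\le0$ and the last strict, $\rho_t(t,0-)<0$, completing the claim; the minimum point is entirely symmetric. The invariance of $D$ and the convergence $\rho(t,\cdot)\to P$ then follow exactly as in the final step of Theorem~\ref{th3}.
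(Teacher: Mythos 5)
Your proposal is correct and follows exactly the route the paper intends: the paper omits the proof of Theorem~\ref{th:C3n}, stating only that it is ``a very similar argument as those for Case A1'' (Theorem~\ref{th3}), and you carry out precisely that adaptation, including the correct observation that the linearity of $V$ in $v(\rho)$ removes the need for $v''\le 0$ and the correct handling of the kink at $x=0$ via the left trace and the connecting condition~\eqref{CCC2}.
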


We perform a numerical simulation with Riemann initial data, and plot the result
in Figure~\ref{fig:LFC1}. We observe that the solution $\rho(x,t)$ quickly
approaches a profile, confirming Theorem~\ref{th:C3n}.
We remark that the initial Riemann data actually does not satisfy the assumptions
in Theorem~\ref{th:C3n}, and yet we still observe stability.
This indicates that the basin of attraction is probably larger than what is stated in 
Theorem~\ref{th:C3n}.

\begin{figure}[htbp]
\begin{center}
\includegraphics[width=7.5cm,height=4.5cm,clip,trim=3mm 1mm 11mm 6mm]{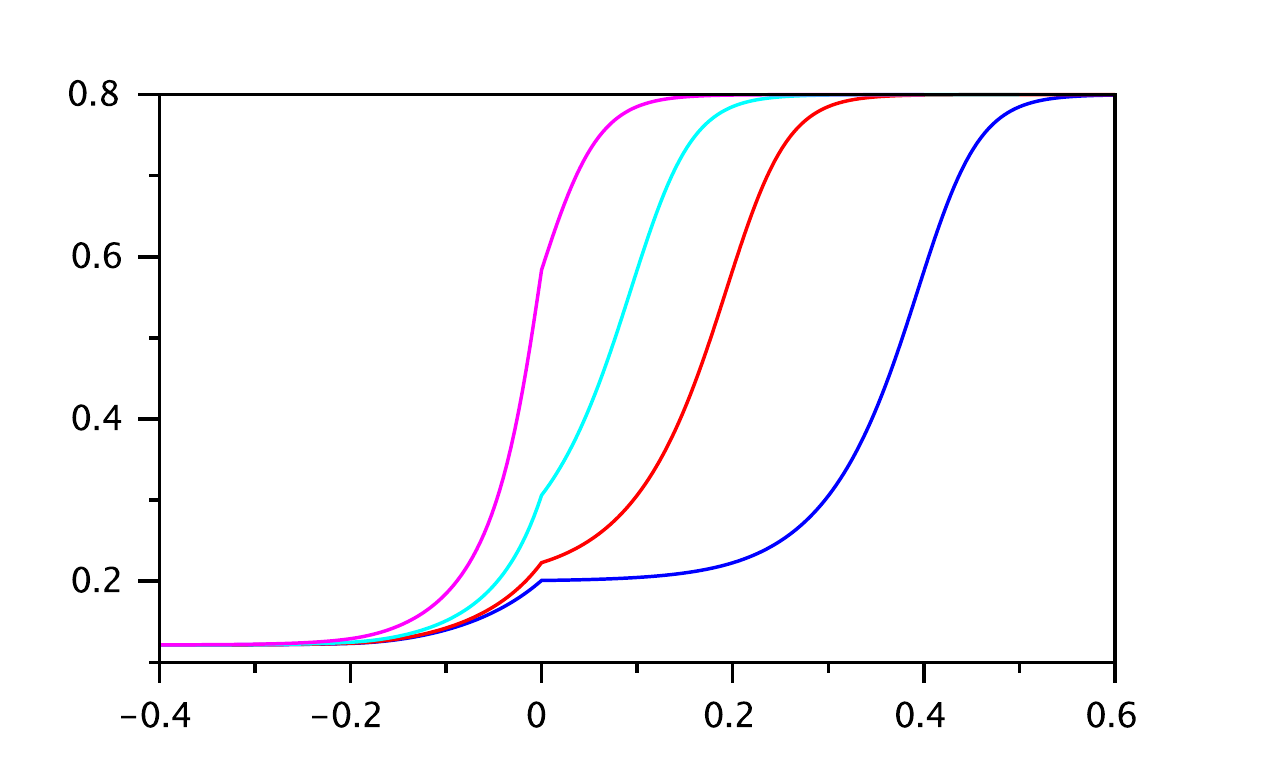}
\caption{Sample traveling wave for Case C1.}
\label{fig:C1}

\bigskip

\includegraphics[width=13cm,clip,trim=0mm 1mm 5mm 1mm]{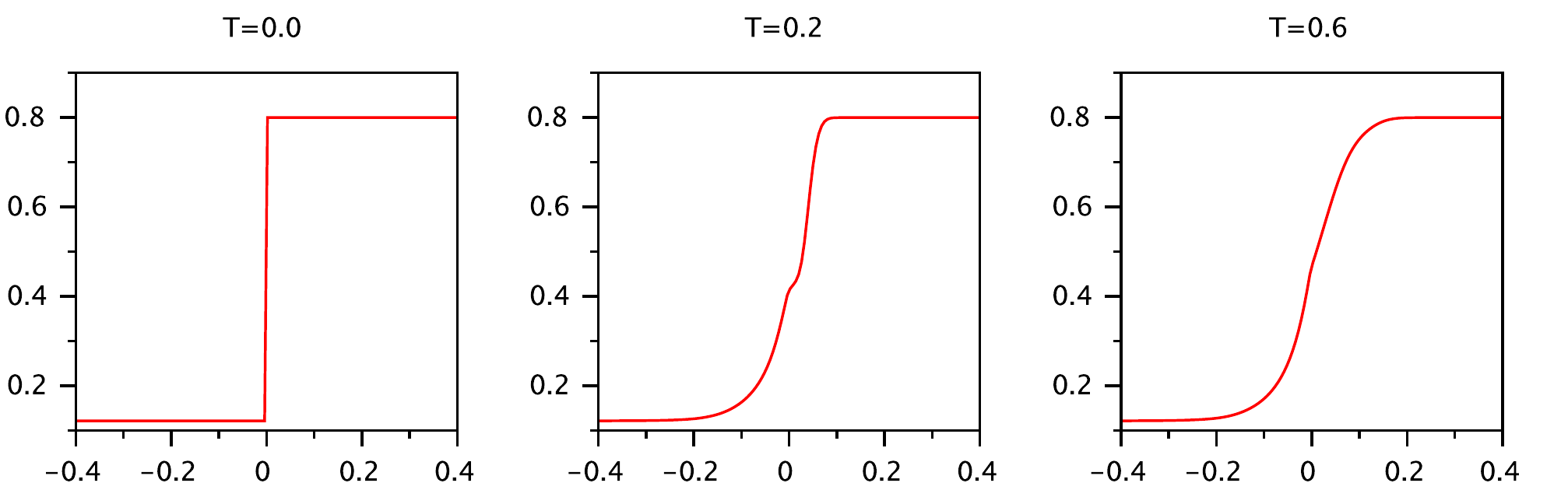}
\caption{Numerical simulation for the PDE model with Riemann initial data
for Case C1.}
\label{fig:LFC1}
\end{center}
\end{figure}

\subsubsection{Case C2, Case C3, and Case C4}

For Case C2 we have $\rho^- <\rho^+ \le \hat\rho$.
Since $\rho^+$ is an unstable asymptote for $x\to\infty$, 
we must have the constant solution $P(x)\equiv \rho^+$ for $x\ge 0$.
A unique profile $P(x)$ can be obtained by solving this initial value problem 
backward in $x$, for $x<0$. 
This unique profile  is monotone increasing, see a sample graph in Figure~\ref{fig:C2}. 
Finally, the profile is not a local attractor for solutions of~\eqref{eq:CL2}, since 
$\rho^+$ is an unstable asymptote. 
This is further confirmed by the numerical simulation in Figure~\ref{fig:LFC2},
where we observe oscillation entering the region $x>0$ and persisting in time.

For Case C3 with $\hat\rho \le \rho^+ < \rho^-$ and 
Case C4 with $\rho^- > \hat\rho >\rho^+$,  there are no profiles, 
similar to the results for Case A3 and Case A4.  
Numerical simulations with Riemann initial data are given in Figure~\ref{fig:LFC3} 
for Case C3, and in Figure~\ref{fig:LFC4} for Case C4. 
In both cases, we observe that oscillations form around $x=0$ and enter the region
$x>0$ and/or $x<0$, and the oscillations persist as $t$ grows.

\begin{figure}[htbp]
\begin{center}
\includegraphics[width=7.5cm,height=4cm,clip,trim=3mm 1mm 11mm 6mm]{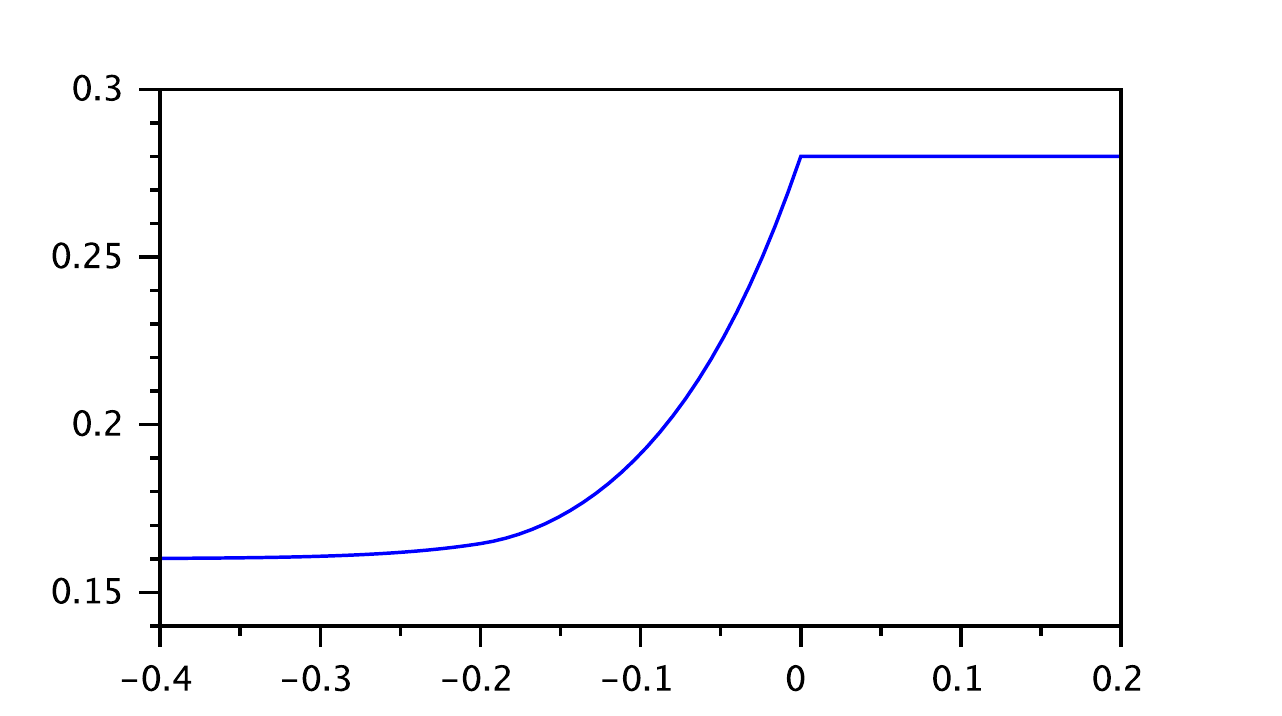}
\caption{Sample traveling wave for Case C2.}
\label{fig:C2}

\bigskip

\includegraphics[width=13cm,clip,trim=0mm 1mm 5mm 1mm]{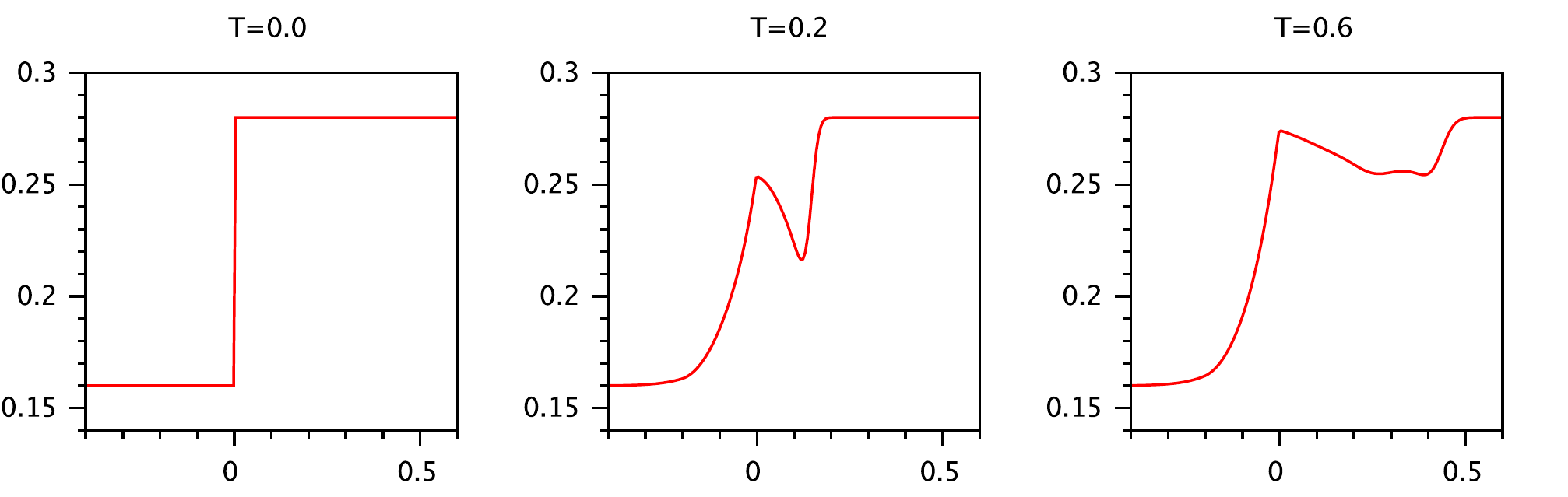}
\caption{Numerical simulation for the PDE model with Riemann initial data for Case C2.}
\label{fig:LFC2}

\includegraphics[width=13cm,clip,trim=0mm 1mm 5mm 1mm]{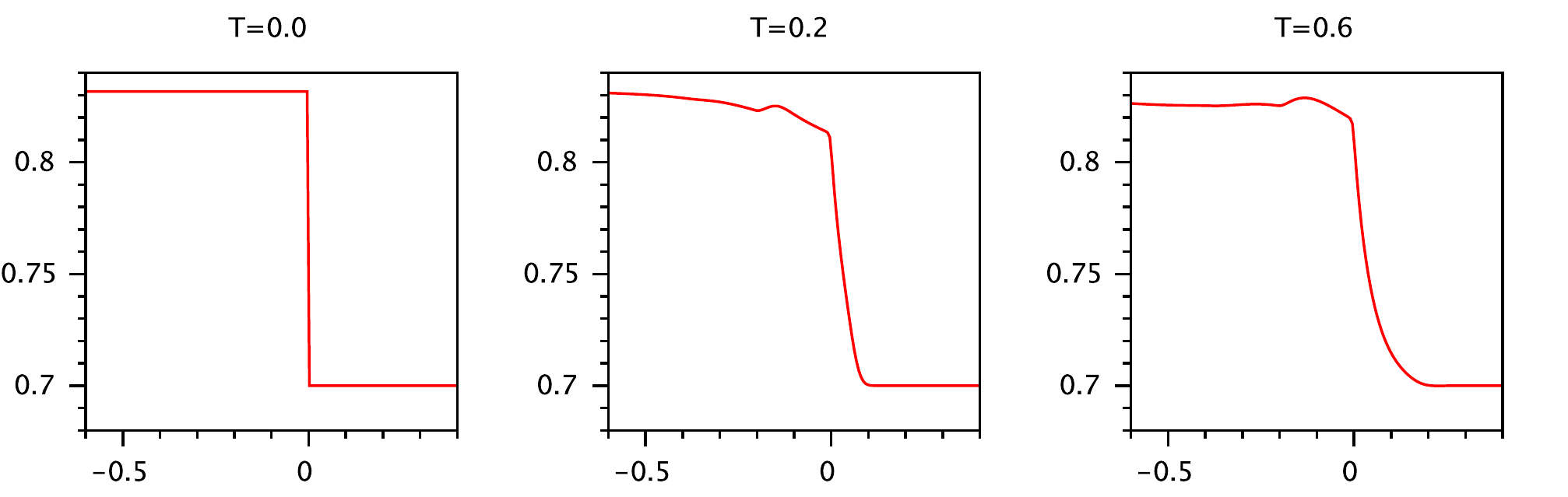}
\caption{Numerical simulation for the PDE model with Riemann initial data for Case C3.}
\label{fig:LFC3}

\bigskip

\includegraphics[width=13cm,clip,trim=0mm 1mm 5mm 1mm]{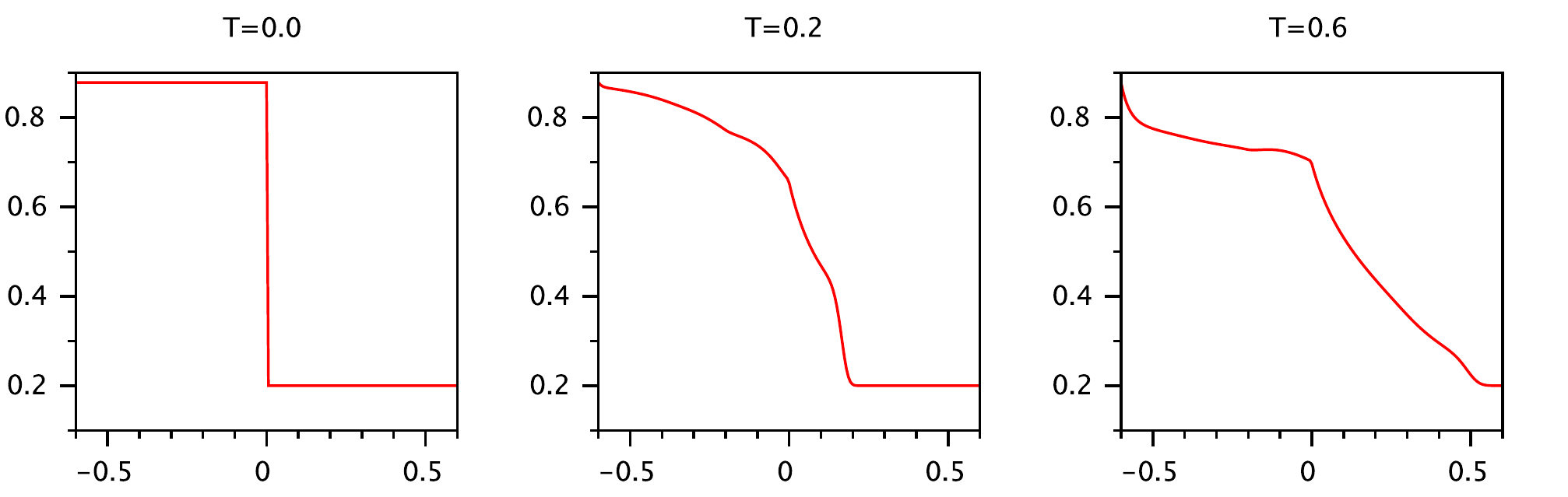}
\caption{Numerical simulation for the PDE model with Riemann initial data for Case C4.}
\label{fig:LFC4}
\end{center}
\end{figure}

\subsection{Case D:  $\kappa^-<\kappa^+$}
This is the counter part for Case B, and similarly we have 4 sub-cases
which we discuss in detail in the following sub sections.

\subsubsection{Case D1:  $\rho^- < \hat\rho < \rho^+$}


\begin{theorem}\label{th:D1}
Let $\kappa^- < \kappa^+$, and let $\rho^-,\rho^+$ satisfy
\[ 0<\rho^- < \hat\rho < \rho^+<1, \qquad f^-(\rho^-)=f^+(\rho^+) . \]
Then there exist infinitely many solutions for~\eqref{C3} which satisfy the 
``boundary conditions''
\[ \lim_{x\to-\infty} P(x) = \rho^-,\qquad \lim_{x\to+\infty} P(x) =\rho^+.\]
\end{theorem}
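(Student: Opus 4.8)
The plan is to combine the backward-discretization machinery developed for Theorem~\ref{th:C1} (the model (M2) construction on $x<0$) with the envelope analysis of Theorem~\ref{tm:B1} (which copes with the loss of monotonicity forced by $\kappa^-<\kappa^+$). Since $\rho^+>\hat\rho$ is a stable asymptote as $x\to+\infty$, I would first fix the profile on $x\ge0$: by Theorem~\ref{th:O1} with $\bar\kappa=\kappa^+$, it must be either the constant $P(x)\equiv\rho^+$ or a horizontal shift of the smooth monotone profile $\cW(x)$. Each such choice is smooth and increasing on $x>0$, and as the shift varies the trace $P(0)$ sweeps the interval $(\rho_1,\rho^+]$, where $\rho_1<\rho^-<\hat\rho$ is the second root of $f^+(\rho_1)=\bar f$. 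This one-parameter family of admissible initial data will be the source of the infinitely many profiles.

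Next I would extend each datum to $x<0$. Because $P$ is Lipschitz and \emph{continuous} at $x=0$ (only its derivative jumps, according to~\eqref{connectC}), the backward construction of Theorem~\ref{th:C1} applies almost verbatim: discretizing $x<0$ and solving the scalar equation~\eqref{eq:NN} for $P_{k-1}$ gives a unique root once $\dx$ is small, since the estimate $\mathcal{G}'(P_{k-1})=V(x_{k-1})-P_{k-1}\,\O(1)\,\dx>0$ used there is insensitive to the sign of $\kappa^--\kappa^+$. Letting $\dx\to0$ then produces a solution of the true relation $P(x)V(x)\equiv\bar f$ on $x<0$ with the prescribed trace. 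The essential departure from Case~C1 is that its monotonicity step exploited $\kappa^->\kappa^+$; here the reversed inequality flips that sign, so the constructed profiles need not be monotone and may oscillate about $\rho^-$.

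To control this I would import the oscillation analysis of Theorem~\ref{tm:B1}. Writing $g(x)=\int_0^h v(P(x+s))w(s)\,ds$, the relation $P(x)V(x)\equiv\bar f$ gives $P'(x)=-P(x)V'(x)/V(x)$, and for $x<-h$ (where $\kappa$ is constant on the averaging window, so $V=\kappa^-g$) an integration by parts using $w(h)=0$ and $w(0)=-\int_0^h w'(s)\,ds$ yields
\[
g'(x)=\int_0^h \bigl[v(P(x+s))-v(P(x))\bigr]\,(-w'(s))\,ds .
\]
Since $w'<0$ and $v$ is strictly decreasing, this is the exact analogue of the identity $A(Q;x)_x=\int_0^h[Q(x+s)-Q(x)](-w'(s))\,ds$ used in Theorem~\ref{tm:B1}, and the same extremum argument shows that the local maxima of an oscillatory $P$ decrease toward $\rho^-$ as $x\to-\infty$ while the minima increase, so the oscillations are trapped between monotone envelopes both converging to $\rho^-$ (stable by Lemma~\ref{lm:AL} since $\rho^-<\hat\rho$). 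Taking the limiting datum $P(x)\equiv\rho_1$ on $x>0$ produces the lower envelope $P^\flat$, which stays below $\rho^-$ and rises to it; every profile built from a genuine shift of $\cW$ lies above $P^\flat$ by the non-crossing (ordering) property, the analogue of Corollary~\ref{cor2}, and a matching upper profile $P^\sharp$ is obtained by continuity. Between $P^\flat$ and $P^\sharp$ lie infinitely many profiles, all with the correct limits.

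The main obstacle will be this last, non-monotone step. One must show not merely that the local extrema are monotone along the profile, but that the trapping envelopes converge to $\rho^-$ itself (and not to the other root of $f^-$ sharing the flux $\bar f$), and that a profile attaining the asymptote $\rho^-$ persists for a whole interval of traces $P(0)$. Both follow the template of Theorem~\ref{tm:B1}, but since the (M2) averaging couples $P$ at shifted points through $v(P(\cdot))$ rather than through $P(\cdot)$, the envelope estimates must be re-derived in terms of the $g'(x)$ above. Verifying that the strip $-h<x<0$, where the kernel genuinely mixes $\kappa^-$ and $\kappa^+$ and where only the kink condition~\eqref{connectC} (not the clean identity above) is available, does not spoil the envelope monotonicity is the delicate point.
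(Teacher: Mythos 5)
Your proposal follows essentially the same route as the paper's proof: restrict the data on $x\ge 0$ to horizontal shifts of $\cW$, build the lower envelope $P^\flat$ from the datum $P\equiv\rho_1$, control possible oscillations on $x<0$ via the identity $V'(x)=-\kappa^-\int_0^h[v(P(x+s))-v(P(x))]w'(s)\,ds$ exactly as in Theorem~\ref{tm:B1}, and conclude by ordering and continuity that infinitely many profiles lie between $P^\flat$ and $P^\sharp$. The only point you skip that the paper makes explicit is that the constant datum $P\equiv\rho^+$ on $x\ge0$ must be discarded (by~\eqref{connectC} it forces $P'(0-)<0$ and the backward solution reaches $1$ in finite time), but this does not affect the existence of infinitely many profiles.
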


\begin{proof}
The proof follows a similar line of argument as those in the proof of Theorem~\ref{tm:B1}. 

\textbf{Step 1.} On $x\ge 0$, the profile can 
be either the constant function $P(x)\equiv \rho^+$ or some horizontal shift of $\cW(x)$. 
Unfortunately, the constant function $P(x)\equiv \rho^+$ on $x \ge 0$ 
does not work.  Indeed, by the connecting condition~\eqref{connectC} 
we have 
\[
P'(0-) = P'(0+) - (\kappa^+-\kappa^-) \rho^+ w(0) <0.
\]
As $x<0$ get smaller, $P(x)$ grows monotonically, and reaches the value $1$ at some finite
$\hat x$, and solution can not continue for $x<\hat x$. 

Thus, $P(x)$ is some horizontal shift of $\cW$. 
At $x=0$, $P(0)$ takes value between $\rho_1$ and $\rho^+$, where 
$\rho_1 < \hat\rho<\rho^+$ and $f^+(\rho_1) =f^+(\rho^+) \;\dot=\; \bar f$.

\medskip 

\textbf{Step 2.}  Consider the initial value problem with $P(x)\equiv  \rho_1$ on $x\ge 0$
where $\rho_1$ is defined in Step 1.
By a similar argument as in Step 4 of the proof for Theorem~\ref{tm:B1}, 
the solution is monotone decreasing and approaches $\rho^-$ as $x\to-\infty$.
We call this profile $P^\flat(x)$.
By the ordering property, all profiles $P(x)$ lie above $P^\flat(x)$, 
therefore $P^\flat(x)$ serves as a lower envelope.

\medskip 

\textbf{Step 3.} Let $P(x)$ be a profile such that $P(x)$ equals some horizontal shift 
of $\cW(x)$ on $x\ge 0$. 
As $P(0)$ varies from $\rho_1$ to $\rho^+$, the profile might lose monotonicity
and becomes oscillatory. By a similar argument as in Step 5 of the proof for 
Theorem~\ref{tm:B1}, we have, for $x<-h$,
\[
V'(x) = -\kappa^- \int_0^h [v(P(x+s)) - v(P(x))] w'(s) \; ds.
\]
Thus, if $V'(x')=0$ for some $x'$, then on $x\in[x',x'+h]$ we have either 
$P(x)\equiv P(x')$, or $P(x)$ oscillates around $P(x')$. 
Following the same argument as in the rest of Step 5 of the proof for 
Theorem~\ref{tm:B1}, we conclude 
that there exists an upper envelope $P^\sharp(x)$ for all the profiles
that approaches $\rho^-$ as $x\to-\infty$.
By continuity and the ordering of the profiles, there exist infinitely many
profiles between $P^\flat(x)$ and $P^\sharp(x)$. 
\end{proof}

Sample profiles are given in Figure~\ref{fig:D1}. 
Using a  similar argument as for Theorem~\ref{th3} for Case A1, 
these profiles are time asymptotic limits for the solutions of the the Cauchy problem
of~\eqref{eq:CL2}. We omit the details of the proof. 
A numerical simulation is presented in Figure~\ref{fig:LFD1}, with Riemann data,
where we observe this asymptotic behavior as $t$ grows.

\begin{figure}[htbp]
\begin{center}
\includegraphics[width=7.5cm,clip,trim=3mm 1mm 11mm 6mm]{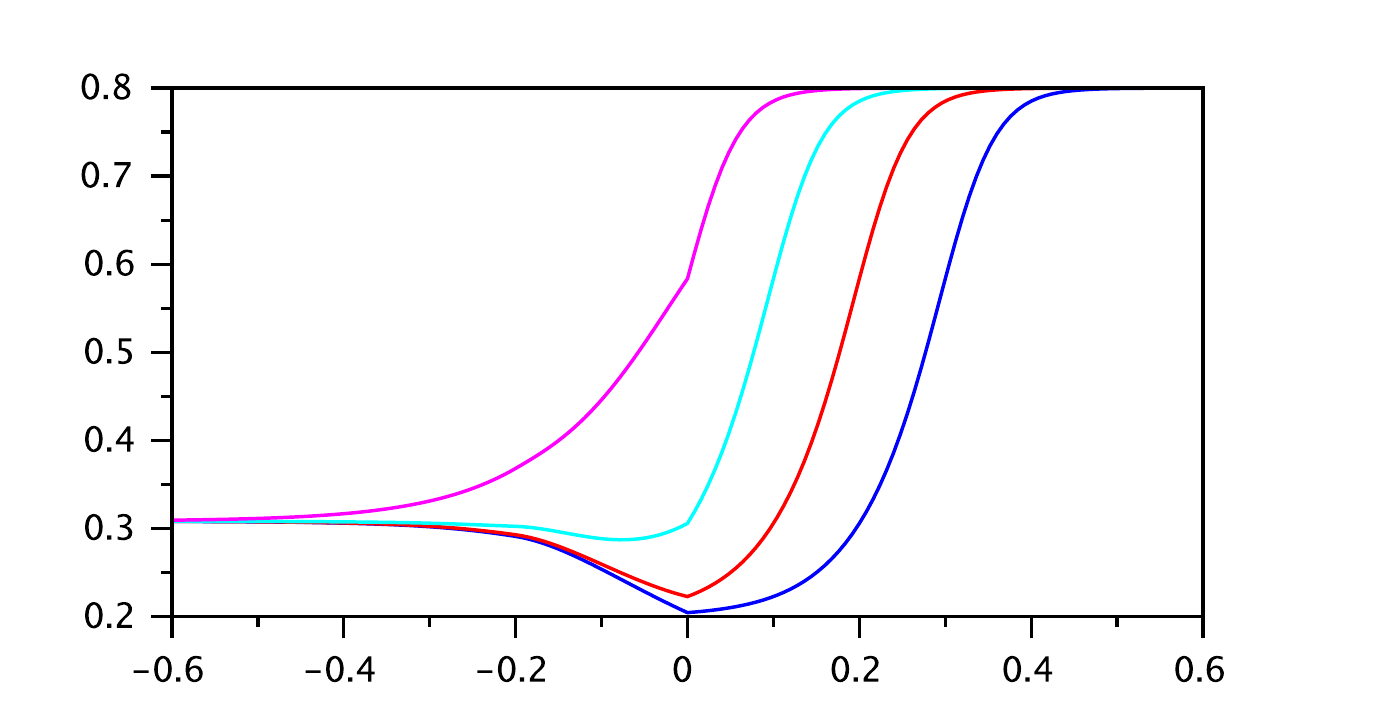}
\caption{Sample traveling wave for Case D1.}
\label{fig:D1}

\bigskip

\includegraphics[width=13cm,clip,trim=0mm 1mm 5mm 1mm]{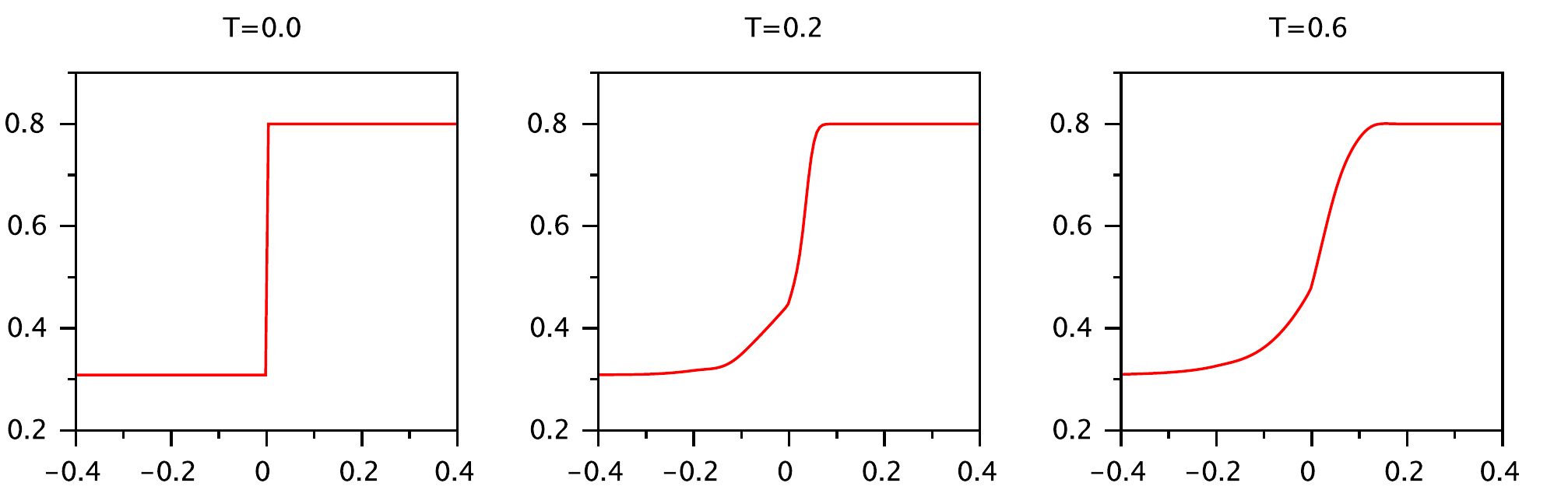}
\caption{Numerical simulation for the PDE model with Riemann initial data
for Case D1.}
\label{fig:LFD1}
\end{center}
\end{figure}

\subsubsection{Case D2, Case D3, and Case D4}

For Case D2 we have  $\rho^+ < \rho^- \le \hat\rho $. 
Since $\rho^+$ is an unstable asymptote for $x\to\infty$, 
we must have  with $P(x)\equiv \rho^+$  on $x\ge 0$. 
The unique profile $P(x)$ is constructed by solving this initial value problem 
backward in $x$ on $x<0$. 
The profile is monotone decreasing, see a sample plot in Figure~\ref{fig:D2}.
However, the profile is not an attractor for solutions 
of the conservation law~\eqref{eq:CL2},
as also indicated by the numerical simulation in Figure~\ref{fig:LFD2}.
We observe that oscillation forms around $x=0$, then enters the region $x>0$, and it persists
in time. 

For  Case D3 we have  $ \hat\rho \le \rho^- < \rho^+$,
while for  Case D4 we have  $\rho^+ < \hat\rho < \rho^-$.
For both cases, there are no profiles, similar to the results in Case B3 and Case B4.
Numerical simulation results are presented in Figure~\ref{fig:LFD3}
for Case D3 and in Figure~\ref{fig:LFD4}
for Case D4.  In both cases, we observe oscillations that enter the region $x<0$ 
and/or $x>0$, and they persist in time. 

\begin{figure}[htbp]
\begin{center}
\includegraphics[width=7.5cm,clip,trim=4mm 1mm 12mm 7mm]{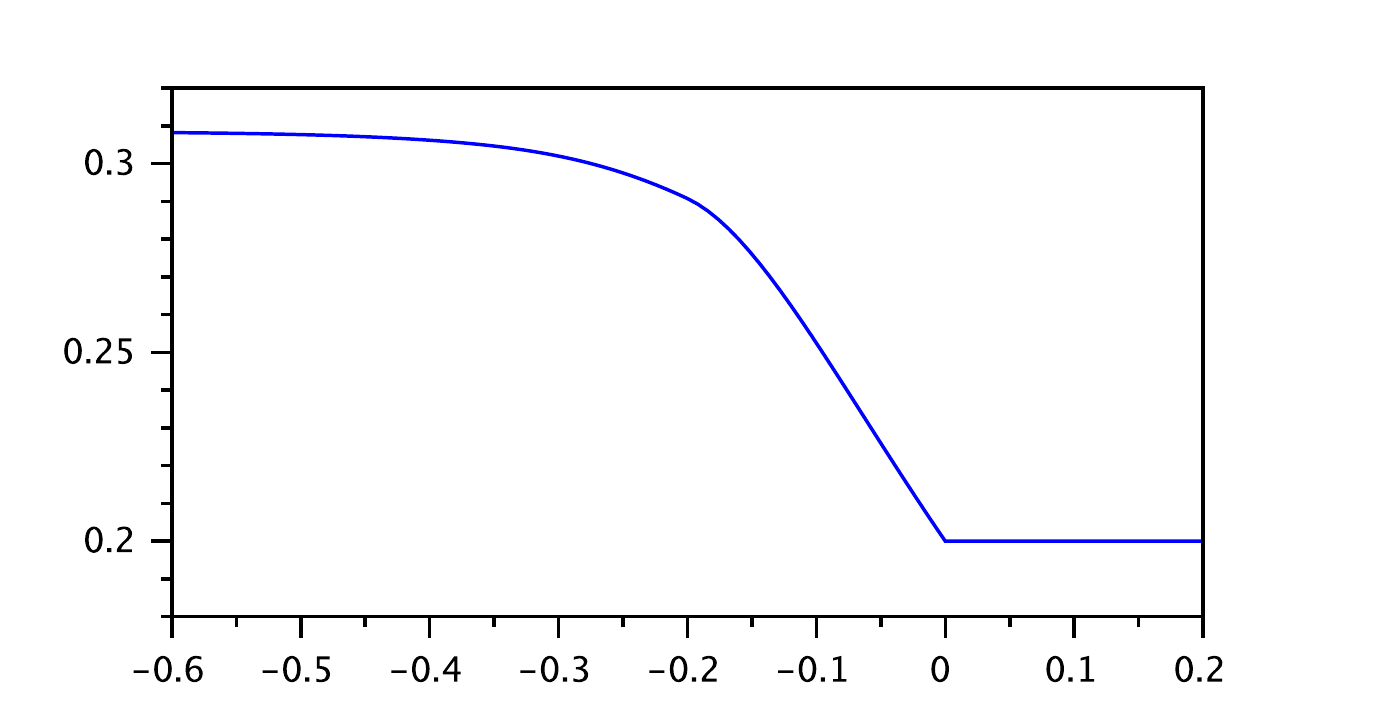}
\caption{Sample traveling wave for Case D2.}
\label{fig:D2}

\bigskip

\includegraphics[width=12.5cm,clip,trim=0mm 1mm 6mm 1mm]{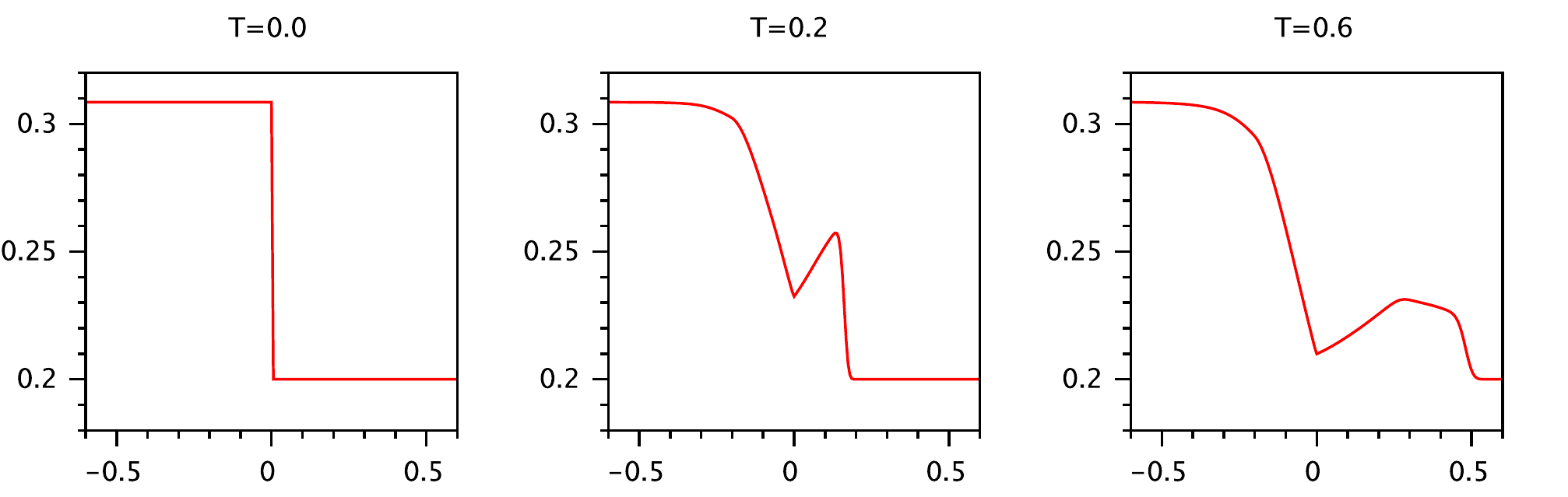}
\caption{Solution of Riemann problem for Case D2.}
\label{fig:LFD2}

\bigskip

\includegraphics[width=12.5cm,clip,trim=0mm 1mm 6mm 1mm]{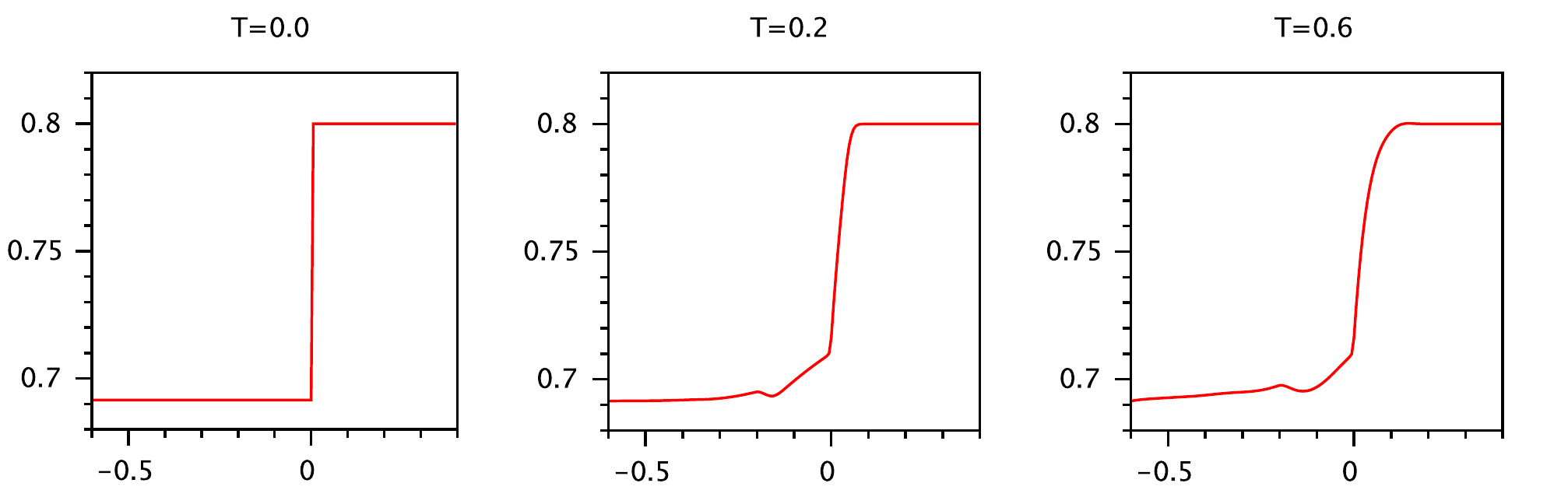}
\caption{Solution of Riemann problem for Case D3.}
\label{fig:LFD3}

\bigskip

\includegraphics[width=12.5cm,clip,trim=0mm 1mm 6mm 1mm]{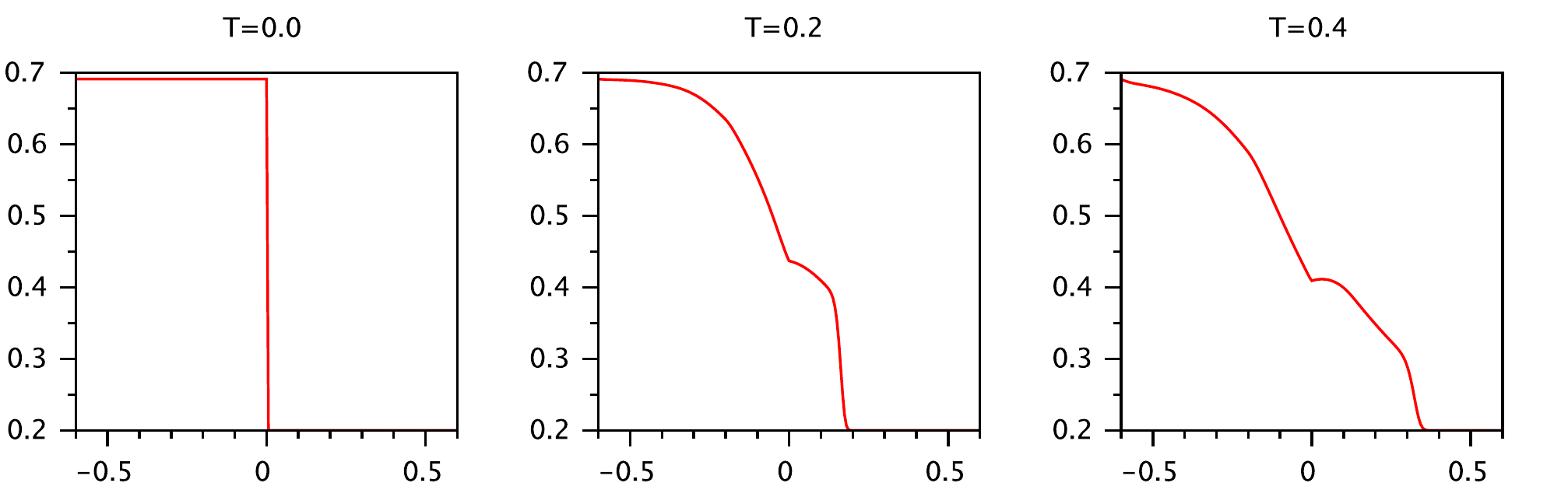}
\caption{Solution of Riemann problem for Case D4.}
\label{fig:LFD4}
\end{center}
\end{figure}

\section{Concluding remarks}\label{sec:fr}

We study traveling wave profiles for two nonlocal PDE models for traffic flow
with rough road condition. 
For all possible cases, we show that there can exist 
infinitely many traveling wave profiles, a unique profile, or no profiles at all,
depending on the jump in speed limit and the limits $\rho^-,\rho^+$. 
The stability of these profiles also vary from case to case.

Formally, the non-local PDE models are the macroscopic limits of the corresponding 
non-local particle models, referred to as the follow-the-leaders (FtLs) model. 
In the case where $\kappa(x)\equiv 1$, 
the existence of traveling waves and their convergence to the traveling waves
for the PDE models were provided in~\cite{RidderShen2018}.
It would be interesting to establish a similar result for the FtLs model
where $\kappa(x)$ has a jump. 
Details will come in  a forthcoming work. 

Codes for all the numerical simulations in this paper can be found at: 

\centerline{ \texttt{http://www.personal.psu.edu/wxs27/TrafficNLRough/}}

\end{document}